\newcommand{\N}{\mathbb{N}}
\newcommand{\Z}{\mathbb{Z}}
\newcommand{\R}{\mathbb{R}}
\newcommand{\C}{\mathbb{C}}
\newcommand{\calB}{\mathcal{B}}
\newcommand{\calE}{\mathcal{E}}
\newcommand{\calI}{\mathcal{I}}
\newcommand{\bfE}{\mathbf{E}}
\newcommand{\calZ}{\mathcal{Z}}
\renewcommand{\Re}{\operatorname{Re}}
\theoremstyle{plain}
\newtheorem{theorem}{Theorem}[section]
\newtheorem{lemma}[theorem]{Lemma}
\newtheorem{proposition}[theorem]{Proposition}
\newtheorem{corollary}[theorem]{Corollary}
\theoremstyle{remark}
\newtheorem{remark}[theorem]{Remark}
\title{The computation of $\zeta(2k)$, $\beta(2k+1)$ and beyond by using telescoping series\thanks{To appear in ``Orthogonal Polynomials and Special Functions: In Memory of Jos\'e Carlos Petronilho'',
Coimbra Mathematical Texts book series, Springer.}}
\author{\'Oscar Ciaurri\textsuperscript{1}, Luis M. Navas\textsuperscript{2},
Francisco J. Ruiz\textsuperscript{3} and Juan L. Varona\textsuperscript{4}\\[6pt]
\small\textsuperscript{1}Departamento de Matem\'aticas y Computaci\'on,
Universidad de La Rioja,\\[-2pt]
\small 26006 Logro\~no, Spain. Email: {oscar.ciaurri@unirioja.es}\\[6pt]
\small\textsuperscript{2}Departamento de Matem\'aticas,
Universidad de Salamanca,\\[-2pt]
\small 37008 Salamanca, Spain. Email: {navas@usal.es}\\[6pt]
\small\textsuperscript{3}Departamento de Matem\'aticas,
Universidad de Zaragoza,\\[-2pt]
\small 50009 Zaragoza, Spain. Email: {fjruiz@unizar.es}\\[6pt]
\small\textsuperscript{4}Departamento de Matem\'aticas y Computaci\'on, 
Universidad de La Rioja,\\[-2pt]
\small 26006 Logro\~no, Spain. Email: {jvarona@unirioja.es}}
\date{}
\begin{document}

\maketitle

\begin{center}\itshape
Dedicated to the memory of Jos\'e Carlos Soares Petronilho
\bigskip
\end{center}

\begin{abstract}
We present some simple proofs of the well-known expressions for
\[
  \zeta(2k) = \sum_{m=1}^\infty \frac{1}{m^{2k}},
  \qquad
  \beta(2k+1) = \sum_{m=0}^\infty \frac{(-1)^m}{(2m+1)^{2k+1}},
\]
where $k = 1,2,3,\dots$, in terms of the Bernoulli and Euler polynomials.
The computation is done using only the defining properties of these polynomials 
and employing telescoping series. The same method also yields integral formulas 
for $\zeta(2k+1)$ and $\beta(2k)$.

In addition, the method also applies to series of type
\[
  \sum_{m\in\Z} \frac{1}{(2m-\mu)^s},
  \qquad
  \sum_{m\in\Z} \frac{(-1)^m}{(2m+1-\mu)^s},
\]
in this case using Apostol-Bernoulli and Apostol-Euler polynomials.

\smallskip

{MSC: Primary 40C15, Secondary 11M06.}
\end{abstract}

\section{Introduction}
\label{sec:intro}

Let us recall the definition of the Bernoulli and Euler polynomials, denoted by $B_{k}(x)$ and $E_{k}(x)$ respectively, in terms of the power series expansion of a generating function (see e.g.~\cite{Dil-NIST}). Namely,
\begin{equation}
\label{eq:polBerEu}
  \frac{ze^{xz}}{e^z-1} 
  = \sum_{k=0}^\infty B_k(x)\, \frac{z^k}{k!},
\qquad
  \frac{2 e^{xz}}{e^z+1}
  = \sum_{k=0}^\infty E_k(x)\, \frac{z^k}{k!}.
\end{equation}
These series are convergent in a neighborhood of $z = 0$.
It is not hard to show using~\eqref{eq:polBerEu} and some computation with Taylor series that $B_{k}(x)$ and $E_{k}(x)$ are monic polynomials of degree~$k$.


Consider now Riemann's zeta function and Dirichlet's beta function, defined by the series
\begin{equation}
\label{eq:dosseries}
  \zeta(s) = \sum_{m=1}^\infty \frac{1}{m^s},
  \qquad
  \beta(s) = \sum_{m=0}^\infty \frac{(-1)^m}{(2m+1)^s},
\end{equation}
for complex $s$ with $\Re(s) > 1$. Both may be analytically continued to the complex plane, although we are only interested in integer $s \geq 2$, so that we do not have to worry about this. The values
\[
  \zeta(2k) = \sum_{m=1}^\infty \frac{1}{m^{2k}},
  \qquad
  \beta(2k+1) = \sum_{m=0}^\infty \frac{(-1)^m}{(2m+1)^{2k+1}},
\]
may be expressed as follows in terms of the Bernoulli numbers $B_{2k} := B_{2k}(0)$ (or $B_{2k}(1)$, which is the same) and the Euler numbers $E_{2k} := 2^{2k}E_{2k}(1/2)$ (note that apart from the factor of $2^{2k}$, in the Euler case the evaluation is at $1/2$, not at~$0$). The exact formulas are:
\begin{equation}
\label{eq:sumber}
  \zeta(2k) = \frac{(-1)^{k-1} 2^{2k-1} \pi^{2k}}{(2k)!} B_{2k},
  \qquad k=1,2,3,\dots,
\end{equation}
\begin{equation}
\label{eq:sumeuler}
  \beta(2k+1) = \frac{(-1)^{k} \pi^{2k+1}}{2^{2k+2} (2k)!} \,E_{2k},
  \qquad k=0,1,2,\dots.
\end{equation}
These kinds of expressions have had great historical relevance. Euler was the first to prove~\eqref{eq:sumber}, in 1740, thus showing that $\zeta(2k)$ is always a rational multiple of $\pi^{2k}$. As for evaluation at odd positive integers, i.e. the values $\zeta(2k+1)$, this is much more mysterious and little is known about its arithmetical nature. In fact, the only established fact remains that $\zeta(3)$ is irrational, as proved by Ap\'ery in 1979.
In the case of the Dirichlet beta function, the reverse holds: \eqref{eq:sumeuler} expresses $\beta(2k+1)$ as a rational multiple of $\pi^{2k+1}$, but there are no known similar formulas for~$\beta(2k)$.

Throughout the years, many different proofs for~\eqref{eq:sumber} and \eqref{eq:sumeuler} have been found, varying in complexity and technical background necessary for their understanding. From the purpose of making them widely accessible, the ideal situation would be to be able to give simple, easy to understand proofs which are self-contained and do not use advanced machinery that one would need to study previously. A sample of various proofs, some of them only for $\zeta(2)$, may be found in~\cite{AiZi-zeta, AD, AmoDF, Ber, Duo, DwMi, Gr, Mor, Ri} and the bibliographies within those sources.

Many proofs of the formulas use the method of residues, or infinite products, or the pointwise convergence of Fourier series, which place them outside the scope of first year math students.
For example, let us assume that we know how to write Bernoulli and Euler polynomials in terms of trigonometric series, which were found by Adolf Hurwitz in 1890 (see \cite[\S\,24.8(i)]{Dil-NIST}). For even indices, we have
\begin{equation}
\label{eq:HurB2k}
  B_{2k}(x) = \frac{2(-1)^{k-1}(2k)!}{(2\pi)^{2k}} 
  \sum_{m=1}^{\infty} \frac{\cos(2\pi mx)}{m^{2k}},
  \quad
  x \in [0,1],
  \qquad k \geq 1.
\end{equation}
Now \eqref{eq:sumber} is obtained immediately by setting $x=0$. On the other hand, the corresponding Hurwitz expansion of the Bernoulli polynomials of odd index is 
\begin{equation}
\label{eq:HurB2k+1}
  B_{2k+1}(x) = \frac{2(-1)^{k-1}(2k+1)!}{(2\pi)^{2k+1}} 
  \sum_{m=1}^{\infty} \frac{\sin(2\pi mx)}{m^{2k+1}},
  \quad
  x \in [0,1],
  \qquad k \geq 1
\end{equation}
(also valid for $k=0$ and $0<x<1$). Now, however, \eqref{eq:HurB2k+1} is not useful for evaluating $\zeta(2k+1)$; for instance, if we take $x=0$ we only get the trivial expansion $0=0$. In the same way, the Euler polynomials satisfy 
\begin{equation}
\label{eq:HurE2k}
  E_{2k}(x) = \frac{4(-1)^{k}(2k)!}{\pi^{2k+1}} 
  \sum_{m=0}^{\infty} \frac{\sin((2m+1)\pi x)}{(2m+1)^{2k+1}},
  \quad
  x \in [0,1],
  \qquad k \geq 1;
\end{equation}
\begin{equation}
\label{eq:HurE2k-1}
  E_{2k-1}(x) = \frac{4(-1)^{k}(2k-1)!}{\pi^{2k}} 
  \sum_{m=0}^{\infty} \frac{\cos((2m+1)\pi x)}{(2m+1)^{2k}},
  \quad
  x \in [0,1],
  \qquad k \geq 1.
\end{equation}
Here \eqref{eq:sumeuler} arises from \eqref{eq:HurE2k} with $x=1/2$, but \eqref{eq:HurE2k-1} with $x=1/2$ gives $0=0$. The Hurwitz expansions \eqref{eq:HurB2k}--\eqref{eq:HurE2k-1} are Fourier expansions in terms of the usual trigonometric basis, and they can be easily obtained using standard methods for orthogonal systems, provided we quote the appropriate theorems for the pointwise convergence of Fourier series (they can also be obtained using the residue theorem, although this requires a course in complex analysis). We cannot avoid this background if we want to prove \eqref{eq:sumber} and \eqref{eq:sumeuler} as immediate consequences of the Hurwitz expansions.

No such theoretical background is needed for summing a telescoping series, whose convergence is usually straightforward to check. For example, the series $\sum_{n=1}^{\infty} 1/(n^2+n)$ is much easier to sum than $\sum_{n=1}^{\infty} 1/n^2$, since the partial fraction decomposition $1/(n^2+n) = 1/n - 1/(n+1)$ gives rise to a telescoping series.

In~\cite{Benko} it is shown how to compute the value $\zeta(2) = \pi^2/6$ by means of a telescoping series, and in~\cite{CNRV} this is done for all the values $\zeta(2k)$. The goal of this note is to give a similar procedure for computing $\beta(2k+1)$ by means of telescoping series; the idea is similar to the one on~\cite{CNRV} but, perhaps surprisingly, the fine tuning at some points is rather different. 

The proof we shall give only requires the use of Taylor series, without needing any additional knowledge of real or complex analysis. Since the Bernoulli and Euler numbers are frequently defined by means of generating functions which are Taylor series, it is natural to start from this basic requirement. Of course, there are other alternative ways to define them (see~\cite{CDG} for example). Thus, the aim of this paper is not to present new results, but rather to show how to sum certain series without invoking higher mathematics. As far as we know, the methods we use to achieve this are in fact new.


For completeness, and to be able to easily compare the techniques, we will also include the proof of \eqref{eq:sumber} that appears in our previous paper~\cite{CNRV} (and also in the textbook \cite[\S\,6.9]{Var}). In addition, we  also study series similar to \eqref{eq:dosseries}, namely
\begin{equation}
\label{eq:summus}
  \sum_{m\in\Z} \frac{1}{(2m-\mu)^s},
  \qquad
  \sum_{m\in\Z} \frac{(-1)^m}{(2m+1-\mu)^s},
\end{equation}
where the parameter $\mu$ is related to the parameter $\lambda$ in the generating functions
\begin{equation*}
  \frac{ze^{zx}}{\lambda e^z-1} = \sum_{k=0}^\infty \calB_k(x;\lambda)\, \frac{z^k}{k!},
  \qquad
  \frac{2e^{zx}}{\lambda e^z+1} = \sum_{k=0}^\infty \calE_k(x;\lambda)\, \frac{z^k}{k!}
\end{equation*}
which are used to define the Apostol-Bernoulli polynomials $\calB_k(x;\lambda)$ (they were introduced by Apostol in 1951 in~\cite{Ap-Lerch, Ap-Lerch-2}) and the Apostol-Euler polynomials $\calE_k(x;\lambda)$.

For the series which involve the Apostol-Bernoulli and Apostol-Euler polynomials, we will use complex exponentials rather than trigonometric functions, via Euler's formula $e^{i\theta} = \cos\theta+i\sin\theta$, con $\theta \in \R$. As is well-known, this tends to simplify calculations, as it happens often enough that the most efficient way of proving trigonometric identities is via conversion to complex exponentials. 
As far as Taylor series are concerned, the only difference is that now we use a complex variable and the domain of convergence is a complex disc rather than a real interval. 
However, a little care must be taken in manipulating the series related to Apostol-Bernoulli and Apostol-Euler polynomials, which may require a little more advanced knowledge, specifically regarding term-by-term limits and differentiation of infinite series of functions.

The organization of this paper is as follows. In Section~\ref{sec:propBE}, we list some properties of Bernoulli and Euler polynomials that will be useful in Sections~\ref{sec:sumB} and~\ref{sec:sumE}. In Section~\ref{sec:sumB} we prove \eqref{eq:sumber} by using a telescoping series.
In Section~\ref{sec:sumE} we prove \eqref{eq:sumeuler}, using a similar method but with enough differences to warrant attention. In Section~\ref{sec:ABAEsin} we give some properties of Apostol-Bernoulli and Apostol-Euler polynomials. In fact, these two families are essentially the same except for a change of variable in the parameter $\lambda$, so in the last sections we will only use Apostol-Euler polynomials. In Section~\ref{sec:mualt} we compute the series \eqref{eq:summus}(RHS) for $s=k=1,2,\dots$ in terms of Apostol-Euler polynomials evaluated at $x=1/2$. 
Section~\ref{sec:parfrac} we prove the expansion of $1/\sin^2(x)$ as an infinite sum of partial fractions, a result that will be used in Section~\ref{sec:musinalt}.
In Section~\ref{sec:musinalt} we compute the series \eqref{eq:summus}(LHS) for $s=k=2,3,\dots$ in terms of Apostol-Euler polynomials evaluated at $x=1$.

\begin{remark}
Another pair of functions related to the Riemann zeta function and also having a proper name are the Dirichlet eta and lambda functions, defined respectively by
\[
  \eta(s) = \sum_{m=1}^\infty \frac{(-1)^{m-1}}{m^s},
  \qquad
  \lambda(s) = \sum_{m=0}^\infty \frac{1}{(2m+1)^s}.
\]
It is easily checked that $\eta(s) = (1-2^{1-s})\zeta(s)$ and $\lambda(s) = (1-2^{-s})\zeta(s)$, so that determining formulas for their values reduces to the case of $\zeta(s)$.
\end{remark}

\section{Some basic properties of the Bernoulli and Euler polynomials}
\label{sec:propBE}

Starting from the definition~\eqref{eq:polBerEu} via generating functions and keeping in mind the uniqueness of power series representations, it is easy to prove many properties of the Bernoulli polynomials. We limit ourselves to those which we will actually need for our purpose. The literature contains a vast list of identities and properties (see for example~\cite{Dil-NIST}).

To begin with, multiplication of \eqref{eq:polBerEu}(LHS) by $(e^x - 1)/x$ leads to
\begin{equation*}
  x^n = \frac{1}{n+1} \sum_{k=0}^{n} \binom{n+1}{k} B_k(x), \qquad n = 0, 1, 2,\dots.
\end{equation*}
This shows that $B_0(x) = 1$ and allows us to recursively compute $B_{k}(x)$, proving by induction that $B_{k}(x)$ is a polynomial of degree $k$. The first few Bernoulli polynomials are
\[
  B_0(x) = 1,
  \qquad
  B_1(x) = x - \frac{1}{2},
  \qquad
  B_2(x) = x^2 - x + \frac{1}{6}.
\]
Differentiating~\eqref{eq:polBerEu}(LHS) with respect to $x$ and comparing $z$-coefficients, we obtain
\begin{equation}
\label{eq:derivBer}
  B'_k(x) = k B_{k-1}(x),
  \qquad k\geq 1.
\end{equation}
Another property which can be deduced from~\eqref{eq:polBerEu} is the symmetry relation
\begin{equation}
\label{eq:simetriaBer}
  B_k(1-x) = (-1)^k B_k(x),
  \qquad k\geq 0.
\end{equation}

Once the Bernoulli polynomials have been introduced, the Bernoulli \emph{numbers} may be defined by $B_k = B_k(0)$ (equivalently, by setting $x=0$ in the generating function~\eqref{eq:polBerEu}). It is easily checked that $x/(e^x-1) + x/2$ is an even function, hence only even powers appear in its Taylor series. From this we see that $B_1 = -1/2$ and $B_{2k+1} = 0$ for all $k \in \N$. Using~\eqref{eq:simetriaBer} we conclude that
\begin{equation}
\label{eq:odd0}
  B_{2k+1}(1) = B_{2k+1}(0) = 0, \qquad k \ge 1.
\end{equation}
It is also true by~\eqref{eq:simetriaBer} that $B_{2k}(1) = B_{2k}(0)$, but these are nonzero. In any case, by~\eqref{eq:derivBer} we have
\begin{equation}
\label{eq:intBer}
  \int_0^{1} B_{k}(x)\,dx = \frac{1}{k+1} (B_{k+1}(1)-B_{k+1}(0)) = 0, \qquad k \ge 1.
\end{equation}

The Euler polynomials have quite similar properties, which one may arrive at in essentially the same way as we have described for Bernoulli polynomials, in this case starting from~\eqref{eq:polBerEu}. The first few Euler polynomials are
\[
  E_0(x) = 1,
  \qquad
  E_1(x) = x - \frac{1}{2},
  \qquad
  E_2(x) = x^2 - x.
\]
They satisfy
\begin{equation}
\label{eq:derivEuler}
  E'_k(x) = k E_{k-1}(x),
  \qquad k\geq 1,
\end{equation}
and
\begin{equation*}
  E_{2k}(0) = E_{2k}(1) = 0,
  \qquad k\geq 1.
\end{equation*}

\section{The value of $\zeta(2k)$ using $B_{2k}(x)$}
\label{sec:sumB}

Let us start with some auxiliary integrals:

\begin{lemma}
\label{lem:Ikm}
For $k \ge 0$ and $m \ge 1$, let
\[
  I_{k,m} = \int_0^{1} B_{2k}(x) \cos(m\pi x)\,dx.
\]
Then
\begin{equation}
\label{eq:evalInt}
  I_{k,m} = 
  \begin{cases}
  0, & m=1,3,5,\dots, \\
  \displaystyle \frac{(-1)^{k-1}(2k)!}{m^{2k} \pi^{2k}}, & m=2,4,6,\dots.
  \end{cases}
\end{equation}
\end{lemma}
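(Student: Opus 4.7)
The plan is to apply integration by parts twice to $I_{k,m}$, exploiting the derivative rule $B'_k = kB_{k-1}$ from~\eqref{eq:derivBer} to successively lower the Bernoulli index, and thereby obtain a recursion for $I_{k,m}$ in terms of $I_{k-1,m}$. Induction on $k$ will then finish both the vanishing for odd $m$ and the closed form for even $m$, with the base case handled by a direct computation.

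For the first integration by parts (differentiating $B_{2k}$, antidifferentiating the cosine) the boundary piece $[B_{2k}(x)\sin(m\pi x)/(m\pi)]_0^1$ vanishes automatically because $\sin(m\pi)=\sin 0=0$, leaving
\[
I_{k,m} \;=\; -\frac{2k}{m\pi}\int_0^1 B_{2k-1}(x)\sin(m\pi x)\,dx.
\]
A second integration by parts on this new integral produces a boundary term $-\frac{1}{m\pi}(B_{2k-1}(1)(-1)^m - B_{2k-1}(0))$ together with an integral proportional to $I_{k-1,m}$. For $k\ge 2$ the odd-index Bernoulli values $B_{2k-1}(0)$ and $B_{2k-1}(1)$ both vanish by~\eqref{eq:odd0}, and the combined identity collapses to the clean recursion
\[
I_{k,m} \;=\; -\frac{2k(2k-1)}{m^2\pi^2}\, I_{k-1,m}, \qquad k\ge 2.
\]

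For the base case $k=1$ I would just substitute $B_1(x)=x-\tfrac12$ and compute: the second boundary term equals $\frac{(-1)^m+1}{m^2\pi^2}$ (zero for $m$ odd, $\tfrac{2}{m^2\pi^2}$ for $m$ even), and $I_{0,m}=\int_0^1\cos(m\pi x)\,dx=0$, so this already matches the claimed formula at $k=1$. A short induction on $k\ge 2$ using the recursion then gives $I_{k,m}=0$ for all odd $m$, and for even $m$ the factors telescope to $(-1)^{k-1}(2k)!/(m^{2k}\pi^{2k})$, as required.

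The only technical wrinkle, hardly an obstacle, is that the two-step integration by parts yields a homogeneous recursion \emph{only} for $k\ge 2$; precisely at $k=1$ the property~\eqref{eq:odd0} does not apply (indeed $B_1(0)=-\tfrac12\ne 0$), which is what forces a separate treatment of the base case. Far from being a nuisance, this residual boundary contribution is exactly what supplies the nonzero seed $\tfrac{2}{m^2\pi^2}$ at even $m$ that drives the induction.
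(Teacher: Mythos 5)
Your proposal is correct and follows essentially the same route as the paper: two integrations by parts using $B'_k = kB_{k-1}$, the observation that the odd-index boundary values $B_{2k-1}(0)=B_{2k-1}(1)=0$ give the homogeneous recursion $I_{k,m} = -\frac{2k(2k-1)}{m^2\pi^2}I_{k-1,m}$ only for $k\ge 2$, a separate explicit computation of the base case $k=1$ (where the surviving boundary term supplies the value $\frac{2}{m^2\pi^2}$ for even $m$), and induction on $k$. No gaps.
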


\begin{proof}
Begin by noting that
\[
  I_{0,m} = \int_0^{1} \cos(m\pi x)\,dx = 0, \quad m= 1,2,3,\dots.
\]
For $k \ge 1$, integrate by parts twice, applying~\eqref{eq:derivBer} both times.
Since $\sin(m \pi x)$ vanishes at $t=0$ and~$t=1$, we obtain
\begin{align*}
  I_{k,m} &= \int_0^{1} B_{2k}(x) \cos(m\pi x)\,dx \\
  &= \frac{1}{m\pi} \Big(B_{2k}(x) \sin(m \pi x) \Big)\Big|_{x=0}^{1}
  - \frac{2k}{m\pi} \int_0^{1} B_{2k-1}(x) \sin(m\pi x)\,dx \\
  &= 0 - \frac{2k}{m\pi} \int_0^{1} B_{2k-1}(x) \sin(m\pi x)\,dx \\
  &= \frac{2k}{m^{2} \pi^{2}} \Big(B_{2k-1}(x) \cos(m \pi x) \Big)\Big|_{x=0}^{1}
  - \frac{2k(2k-1)}{m^2\pi^2} \int_0^{1} B_{2k-2}(x) \cos(m\pi x)\,dx. 
\end{align*}
For $k = 1$, the two polynomials on the last line are $B_{1}(x) = x - 1/2$ and $B_{0}(x) = 1$, which yields the special case
\begin{align*}
  I_{1,m} &= \int_0^{1} B_2(x) \cos(m\pi x)\,dx \\
  &= \frac{2}{m^{2} \pi^{2}} \Big((x-1/2) \cos(m \pi x) \Big)\Big|_{x=0}^{1}
  - \frac{2}{m^2\pi^2} \int_0^{1} \cos(m\pi x)\,dx \\
  &= \begin{cases}
  0, & m=1,3,5,\dots, \\
  \dfrac{2}{m^2 \pi^2}, & m=2,4,6,\dots.
  \end{cases}
\end{align*}
For $k \geq 2$, on the other hand, $B_{2k-1}(x)$ vanishes at $t=0$ and~$t=1$ (by~\eqref{eq:odd0}), from which we obtain the recurrence relation
\[
  I_{k,m} = - \frac{2k(2k-1)}{m^{2} \pi^{2}} \,I_{k-1,m},
  \qquad
  k \geq 2,
\]
and an easy induction in $k$ yields~\eqref{eq:evalInt}.
\end{proof}

On our way to summing $\sum_{n=1}^{\infty} 1/n^{2k}$, we shall also need the following trigonometric identity
\begin{equation}
\label{eq:idtrig}
  \cos(mt) = \frac{\sin(\frac{2m+1}{2}t)-\sin(\frac{2m-1}{2}t)}{2\sin(\frac{t}{2})},
\end{equation}
which can be proved by elementary trigonometry. It will be used to obtain a telescoping series.

We also note the following lemma which will be used several times.

\begin{lemma}
\label{lem:RL}
Let $f$ be a continuously differentiable function on $[0, 1]$. Then
\[
\lim_{R \to \infty} \int_0^{1} f(x) \sin(R x)\,dx = 0.
\]
\end{lemma}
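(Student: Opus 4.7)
The plan is to apply integration by parts a single time, exploiting the hypothesis that $f$ is $C^1$ on $[0,1]$. Writing $u = f(x)$, $dv = \sin(Rx)\, dx$, we get $v = -\cos(Rx)/R$, so
\[
  \int_0^{1} f(x) \sin(R x)\,dx
  = -\frac{f(x)\cos(Rx)}{R}\bigg|_{x=0}^{1} + \frac{1}{R}\int_0^{1} f'(x) \cos(Rx)\,dx.
\]
This neatly isolates a factor of $1/R$ in front of quantities that remain bounded uniformly in $R$.

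The second step is to bound each piece. Since $f$ is continuous on the compact interval $[0,1]$, it is bounded there, so the boundary contribution satisfies
\[
  \left| -\frac{f(x)\cos(Rx)}{R}\bigg|_0^1 \right| \le \frac{|f(0)| + |f(1)|}{R}.
\]
Similarly, continuity of $f'$ on $[0,1]$ gives $M := \max_{[0,1]}|f'(x)| < \infty$, whence
\[
  \left| \frac{1}{R}\int_0^{1} f'(x)\cos(Rx)\,dx \right| \le \frac{M}{R}.
\]
Adding these, the absolute value of the original integral is at most $(|f(0)|+|f(1)|+M)/R$, which tends to $0$ as $R \to \infty$.

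There is essentially no obstacle here: the argument is the textbook Riemann--Lebesgue-type bound, and the only ingredient used beyond the fundamental theorem of calculus is the boundedness of $f$ and $f'$, which is guaranteed by the $C^1$ hypothesis on the compact interval $[0,1]$. In the spirit of the paper (keeping the prerequisites minimal), this is deliberately kept elementary, avoiding any appeal to the general Riemann--Lebesgue lemma that would require more machinery; all one needs is integration by parts. The only point worth emphasizing in writing is that $R$ is a real parameter tending to infinity (not restricted to integers), so no trick like orthogonality of the trigonometric system is available or needed.
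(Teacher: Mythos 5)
Your proof is correct and uses exactly the same argument as the paper: a single integration by parts, followed by bounding the boundary terms and the remaining integral by a constant (depending only on $f$ and $f'$) divided by $R$. The explicit uniform bound $(|f(0)|+|f(1)|+M)/R$ is a slightly more quantitative phrasing of the same estimate.
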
 

\begin{proof}
Integrating by parts,
\[
  \int_0^1 f(x) \sin(Rx)\,dx 
  = - \frac{\cos(R)}{R} f(1) + \frac{1}{R} f(0) + \int_0^1 f'(x) \frac{\cos(Rx)}{R} \,dx.
\]
Since $f'(x)$ is bounded, as is the cosine of course, each term tends to $0$ when $R \to \infty$, hence so does their sum.
\end{proof}

Keeping the previous facts in mind, we turn to the main result of this section.

\begin{theorem}[Euler, 1740]
\label{teo:zeta2k}
For every positive integer $k$ we have
\begin{equation}
\label{eq:zeta2k}
  \zeta(2k) = \sum_{n=1}^\infty \frac{1}{n^{2k}} = \frac{(-1)^{k-1} (2\pi)^{2k}}{2(2k)!} \,B_{2k},
\end{equation}
where $B_{2k}$ are the Bernoulli numbers of even index.
\end{theorem}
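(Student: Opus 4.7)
The plan is to evaluate $\sum_{m=1}^{\infty} I_{k,m}$ in two different ways. On the one hand, Lemma~\ref{lem:Ikm} gives it as a constant multiple of $\zeta(2k)$: the odd‐index terms vanish and the even‐index ones give
\[
  \sum_{m=1}^{\infty} I_{k,m} = \sum_{n=1}^{\infty} \frac{(-1)^{k-1}(2k)!}{(2n)^{2k}\pi^{2k}} = \frac{(-1)^{k-1}(2k)!}{2^{2k}\pi^{2k}}\,\zeta(2k).
\]
On the other hand, by summing a telescoping series built from \eqref{eq:idtrig} I aim to show the same sum equals $B_{2k}/2$, and comparing the two expressions will yield \eqref{eq:zeta2k} after a routine simplification.

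To set up the telescoping, I would apply identity \eqref{eq:idtrig} with $t=\pi x$; the cosines telescope to
\[
  \sum_{m=1}^{N}\cos(m\pi x) = \frac{\sin\!\left(\tfrac{(2N+1)\pi x}{2}\right)}{2\sin(\pi x/2)} - \frac12.
\]
Multiplying by $B_{2k}(x)$ and integrating over $[0,1]$, the constant $-1/2$ contributes nothing because $\int_0^1 B_{2k}(x)\,dx = 0$ by \eqref{eq:intBer}. Thus
\[
  \sum_{m=1}^{N} I_{k,m} = \int_0^1 \frac{B_{2k}(x)}{2\sin(\pi x/2)}\,\sin\!\left(\tfrac{(2N+1)\pi x}{2}\right) dx,
\]
and the task reduces to computing the $N\to\infty$ limit of this integral.

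For the limit I would split $B_{2k}(x) = B_{2k} + \bigl(B_{2k}(x)-B_{2k}\bigr)$. Since $B_{2k}(x)-B_{2k}$ vanishes at $x=0$, and $\sin(\pi x/2)$ has a simple zero there (and does not vanish at $x=1$, where $\sin(\pi/2)=1$), the quotient $\bigl(B_{2k}(x)-B_{2k}\bigr)/(2\sin(\pi x/2))$ extends to a continuously differentiable function on $[0,1]$; Lemma~\ref{lem:RL} applied to this function (with $R=(2N+1)\pi/2$) then kills its contribution. What remains is
\[
  B_{2k}\int_0^1 \frac{\sin\!\left(\tfrac{(2N+1)\pi x}{2}\right)}{2\sin(\pi x/2)}\,dx.
\]
But the integrand equals $\tfrac12 + \sum_{m=1}^{N}\cos(m\pi x)$ by the same telescoping identity, and since $\int_0^1 \cos(m\pi x)\,dx=0$ for $m\ge 1$, this integral is exactly $1/2$ for every $N$. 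Therefore $\sum_{m=1}^{\infty} I_{k,m} = B_{2k}/2$, which, combined with the evaluation from Lemma~\ref{lem:Ikm}, gives \eqref{eq:zeta2k}.

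The one subtle point, and the main obstacle, is verifying that $\bigl(B_{2k}(x)-B_{2k}\bigr)/\sin(\pi x/2)$ is genuinely $C^1$ on $[0,1]$ so that Lemma~\ref{lem:RL} applies cleanly. This is straightforward: write $B_{2k}(x)-B_{2k}=x\,Q(x)$ for a polynomial $Q$ and $\sin(\pi x/2)=x\,h(x)$ with $h$ real‐analytic and nonvanishing on $[0,1]$; the quotient $Q(x)/h(x)$ is then as smooth as we like.
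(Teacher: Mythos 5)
Your proposal is correct and is essentially the paper's own argument: the same integrals $I_{k,m}$ from Lemma~\ref{lem:Ikm}, the same telescoping via~\eqref{eq:idtrig}, the same split $B_{2k}(x)=B_{2k}+(B_{2k}(x)-B_{2k})$ so that Lemma~\ref{lem:RL} applies, and the same use of~\eqref{eq:intBer}. The only (cosmetic) difference is the order of operations: the paper subtracts the constant first, working with $b_{2k}(x)=B_{2k}(x)-B_{2k}$ throughout, while you telescope with $B_{2k}(x)$ itself and then split off the constant, evaluating $\int_0^1 \sin\bigl(\tfrac{(2N+1)\pi x}{2}\bigr)/\bigl(2\sin(\pi x/2)\bigr)\,dx=1/2$ exactly for every $N$.
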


\begin{proof}
To have polynomials that vanish at $x=0$ (the reason is made clear below), 
let $b_{2k}(x) = B_{2k}(x)-B_{2k}(0)$ and take
\[
  I'_{k,m} = \int_0^{1} b_{2k}(x) \cos(m\pi x)\,dx
  = \int_0^{1} (B_{2k}(x)-B_{2k}) \cos(m\pi x)\,dx,
  \qquad k \ge 0, \quad m \ge 1.
\]
Since $\int_0^1 \cos(m\pi x)\,dx = 0$ for every positive integer $m$, it is clear that $I'_{k,m} = I_{k,m}$, so again
\[
  I'_{k,m} = 
  \begin{cases}
  0, & m=1,3,5,\dots, \\
  \displaystyle \frac{(-1)^{k-1}(2k)!}{m^{2k} \pi^{2k}}, & m=2,4,6,\dots.
  \end{cases}
\]

Using that $I'_{k,2m+1} = 0$ for every $m$, applying the trigonometric identity~\eqref{eq:idtrig} and canceling terms in the resulting telescoping series, we have
\begin{multline*}
  \frac{(-1)^{k-1}(2k)!}{\pi^{2k}} \sum_{m=1}^{\infty} \frac{1}{(2m)^{2k}} 
  = \sum_{m=1}^{\infty} I'_{k,2m}
  = \sum_{m=1}^{\infty} I'_{k,2m} + \sum_{m=0}^{\infty} I'_{k,2m+1} \\
\begin{aligned}
  &= \sum_{m=1}^{\infty} I'_{k,m}
  = \sum_{m=1}^{\infty} \int_0^{1} b_{2k}(x) \cos(m\pi x)\,dx \\
  &= \lim_{N \to \infty} \sum_{m=1}^{N} \Bigg( 
    \int_0^{1} b_{2k}(x) \frac{\sin(\frac{2m+1}{2}\pi x)}{2\sin(\frac{\pi x}{2})}\,dx
    - \int_0^{1} b_{2k}(x) \frac{\sin(\frac{2m-1}{2}\pi x)}{2\sin(\frac{\pi x}{2})}\,dx \Bigg) \\
  &= \lim_{N \to \infty} \int_0^{1} b_{2k}(x) \frac{\sin(\frac{2N+1}{2}\pi x)}{2\sin(\frac{\pi x}{2})}\,dx
  - \int_0^{1} b_{2k}(x) \frac{\sin(\frac{\pi x}{2})}{2\sin(\frac{\pi x}{2})}\,dx.
\end{aligned}
\end{multline*}
Let us look closely at the last line. We begin by checking that
\begin{equation}
\label{eq:limBer0}
  \lim_{N \to \infty} \int_0^{1} b_{2k}(x) 
  \frac{\sin(\frac{2N+1}{2}\pi x)}{2\sin(\frac{\pi x}{2})}\,dx = 0.
\end{equation}
Indeed, the function
\[
  f(x) = \frac{b_{2k}(x)}{2\sin(\frac{\pi x}{2})}, \qquad x \in (0,1],
\] 
extends by continuity to $x=0$ (note that $b_{2k}(0)=0$ so that $b_{2k}(x)$ is divisible by $x$) and is continuously differentiable on $[0,1]$.
Setting $R = (2N + 1)\pi/2$, Lemma~\ref{lem:RL} implies~\eqref{eq:limBer0}.

Using \eqref{eq:intBer} and recalling that $B_{2k}(0)=B_{2k}$, the value of the last integral is
\begin{align*}
  \int_0^{1} b_{2k}(x) \frac{\sin(\frac{\pi x}{2})}{2\sin(\frac{\pi x}{2})}\,dx
  &= \frac{1}{2} \int_0^{1} (B_{2k}(x)-B_{2k}) \,dx 
  = -\frac{1}{2} \int_0^{1} B_{2k} \, dx 
  = -\frac{B_{2k}}{2}.
\end{align*}
Thus, finally
\[
  \frac{(-1)^{k-1}(2k)!}{2^{2k}\pi^{2k}} \sum_{m=1}^{\infty} \frac{1}{m^{2k}} = 0 + \frac{B_{2k}}{2}
\]
and we have proved Euler's formula for $\zeta(2k)$.
\end{proof}

Readers familiar with Fourier series will have recognized that the integrals $I_{k,2m}$ evaluated in Lemma~\ref{lem:Ikm} are the Fourier coefficients of the even index Bernoulli polynomials $B_{2k}(x)$, whose Fourier series is~\eqref{eq:HurB2k}.
As we have mentioned in the introduction, the Fourier series yields a quick proof of
~\eqref{eq:zeta2k}, the drawback being that one would need to introduce Fourier series and in particular their pointwise convergence. The proof presented in Theorem~\ref{teo:zeta2k} only uses a telescoping series which is easy to sum and avoids all but the basic theory of the convergence of infinite series. 

It is also worth noting that Lemma~\ref{lem:RL} is the special case for continuously differentiable functions of the Riemann-Lebesgue lemma, whose proof only requires one-variable calculus.

\begin{remark}
It is tempting to ask ourselves if the above method, or a similar one, can be used to gain some insight into the values $\zeta(2k+1)$. The answer is a (very) qualified ``yes''. If we take
\[
  J_{k,m} = \int_0^{1} B_{2k+1}(x) \sin(m\pi x)\,dx
\]
and proceed as in Lemma~\ref{lem:Ikm}, we obtain
\[
  J_{k,m} = 
  \begin{cases}
  0, & m=1,3,5,\dots, \\
  \dfrac{(-1)^{k-1}(2k+1)!}{m^{2k+1} \pi^{2k+1}}, & m=2,4,6,\dots.
  \end{cases}
\]
Using the corresponding trigonometric identity
\[
  \sin(mt) = -\frac{\cos(\frac{2m+1}{2}t)-\cos(\frac{2m-1}{2}t)}{2\sin(\frac{t}{2})},
\]
and continuing as in the proof of Theorem~\ref{teo:zeta2k}, does indeed yield a telescoping series. The limit in $N$ is still null by Lemma~\ref{lem:RL}, but the final integral is not elementary. The final result is
\[
  \zeta(2k+1) = \frac{(-1)^{k-1}2^{2k}\pi^{2k+1}}{(2k+1)!}
  \int_0^{1} B_{2k+1}(x) \frac{\cos(\frac{\pi x}{2})}{\sin(\frac{\pi x}{2})}\,dx,
  \quad k = 1,2,3,\dots.
\]
Thus the series $\sum_{n=1}^{\infty} 1/n^{2k+1}$ is transformed into an integral, the nature of its value remaining elusive.
\end{remark}

\section{The sum of the alternating series $\beta(2k+1)$ using $E_{2k}(x)$}
\label{sec:sumE}

Once again, we begin with some auxiliary integrals.

\begin{lemma}
\label{lem:IkmEuler}
For $k \ge 0$ and $m \ge 1$, let
\[
  I_{k,m} = \int_0^{1} E_{2k}(x) \sin(m\pi x)\,dx.
\]
Then
\begin{equation}
\label{eq:evalIntEuler}
  I_{k,m} = 
  \begin{cases}
  \displaystyle \frac{2(-1)^{k}(2k)!}{m^{2k+1} \pi^{2k+1}}, & m=1,3,5,\dots, \\
  0, & m=2,4,6,\dots. 
  \end{cases}
\end{equation}
\end{lemma}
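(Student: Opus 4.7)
The plan is to mirror the proof of Lemma~\ref{lem:Ikm}, integrating by parts twice and invoking the identity~\eqref{eq:derivEuler} for the derivatives of the Euler polynomials. The situation here is actually slightly cleaner than the Bernoulli case, because already the base case
\[
  I_{0,m} = \int_0^1 \sin(m\pi x)\,dx = \frac{1-(-1)^m}{m\pi}
\]
exhibits the $m$-parity behavior asserted by \eqref{eq:evalIntEuler}, namely $2/(m\pi)$ for odd $m$ and $0$ for even $m$. Thus I can launch the induction right at $k=0$, without needing a separate treatment of $k=1$ as was required in Lemma~\ref{lem:Ikm} (where $I_{0,m}=0$ forced us to start the recurrence one step later).

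For $k \geq 1$, I first integrate by parts with $u = E_{2k}(x)$ and $dv = \sin(m\pi x)\,dx$, using \eqref{eq:derivEuler}. The boundary term $\bigl[-E_{2k}(x)\cos(m\pi x)/(m\pi)\bigr]_0^1$ vanishes because $E_{2k}(0) = E_{2k}(1) = 0$ for $k \ge 1$ (one of the basic properties stated at the end of Section~\ref{sec:propBE}), leaving
\[
  I_{k,m} = \frac{2k}{m\pi}\int_0^1 E_{2k-1}(x)\cos(m\pi x)\,dx.
\]
A second integration by parts, with $u = E_{2k-1}(x)$ and $dv = \cos(m\pi x)\,dx$, produces a boundary term $\bigl[E_{2k-1}(x)\sin(m\pi x)/(m\pi)\bigr]_0^1$ that vanishes for the trivial reason that $\sin(m\pi)=\sin(0)=0$, with no assumption needed on $E_{2k-1}$ at the endpoints. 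What remains is the clean recurrence
\[
  I_{k,m} = -\frac{2k(2k-1)}{m^2\pi^2}\,I_{k-1,m},\qquad k\ge 1.
\]
An easy induction on $k$ converts $I_{0,m}$ into the stated closed form $2(-1)^k(2k)!/(m^{2k+1}\pi^{2k+1})$ for odd $m$, and preserves the value $0$ for even $m$.

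There is essentially no obstacle; the computation is purely mechanical. The only place where one must pay attention is the first boundary term, which genuinely depends on the fact that the Euler polynomials of even positive index vanish at both $0$ and $1$; apart from this, the argument is a routine parallel to the proof of Lemma~\ref{lem:Ikm}.
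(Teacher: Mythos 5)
Your proof is correct and follows essentially the same route as the paper: the same base case $I_{0,m}$, the same double integration by parts via~\eqref{eq:derivEuler} with the boundary terms killed by $E_{2k}(0)=E_{2k}(1)=0$ and by the vanishing of $\sin(m\pi x)$ at the endpoints, and the same recurrence $I_{k,m}=-\tfrac{2k(2k-1)}{m^2\pi^2}I_{k-1,m}$ closed by induction. No issues.
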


\begin{remark}
It is easy to verify that the integrals $I_{k,2m} = \int_0^{1} E_{2k}(x) \sin(2m\pi x)\,dx$ are all zero for $m \ge 1$; however, as opposed to what happens in the proof of Theorem~\ref{teo:zeta2k} in the case of the Riemann zeta function, these integrals do not appear in the present proof. 
\end{remark}

\begin{proof}
For $k=0$ we can immediately compute
\[
  I_{0,m} = \int_0^{1} \sin(m\pi x)\,dx = -\frac{(-1)^m-1}{m\pi}
  =
  \begin{cases}
  2/(m\pi), & m= 1,3,5,\dots,\\
  0, & m= 2,4,6,\dots.
  \end{cases}
\]
For $k \ge 1$, we integrate by parts twice, applying~\eqref{eq:derivEuler} at each step. Since $E_{2k}(x)$ vanishes at $t=0$ and~$t=1$, as does $\sin(m \pi x)$, we obtain
\begin{align*}
  I_{k,m} &= \int_0^{1} E_{2k}(x) \sin(m\pi x)\,dx \\
  &= \frac{-1}{m\pi} \Big(E_{2k}(x) \cos(m \pi x) \Big)\Big|_{x=0}^{1}
  + \frac{2k}{m\pi} \int_0^{1} E_{2k-1}(x) \cos(m\pi x)\,dx \\
  &= 0 + \frac{2k}{m\pi} \int_0^{1} E_{2k-1}(x) \cos(m\pi x)\,dx \\
  &= \frac{2k}{m^{2} \pi^{2}} \Big(E_{2k-1}(x) \sin(m \pi x) \Big)\Big|_{x=0}^{1}
  - \frac{2k(2k-1)}{m^2\pi^2} \int_0^{1} E_{2k-2}(x) \sin(m\pi x)\,dx \\
  &= 0 - \frac{2k(2k-1)}{m^2\pi^2} \int_0^{1} E_{2k-2}(x) \sin(m\pi x)\,dx
  = - \frac{2k(2k-1)}{m^2\pi^2} \, I_{k-1,m}.
\end{align*}
This proves \eqref{eq:evalInt} by induction in~$k$.
\end{proof}

Our technique for summing $\sum_{m=0}^{\infty} (-1)^{m}/(2m+1)^{2k+1}$ uses the Euler polynomials, as well as the trigonometric identity
\begin{equation}
\label{eq:idtrigEuler}
  \sin((2m+1)t) = \frac{\sin((2m+2)t)+\sin(2mt)}{2\cos(t)},
\end{equation}
which is again easy to prove by basic trigonometry, and will allow us to transform
$\sum_{m=0}^{\infty} (-1)^{m}/(2m+1)^{2k+1}$ into a telescoping series.

We also need the Taylor series of the arctangent,
\begin{equation}
\label{eq:arctan}
  \arctan(t) = \sum_{m=0}^{\infty} \frac{(-1)^m}{2m+1} \,t^m,
  \qquad -1 < t \le 1.
\end{equation}
In particular, $\sum_{m=0}^{\infty} (-1)^{m}/(2m+1) = \arctan(1) = \pi/4$.
Since $\beta(1) = \sum_{m=0}^{\infty} (-1)^{m}/(2m+1)$ and $E_0=1$, this proves the case $k=0$ of~\eqref{eq:sumeuler}. Thus from now on we may assume that $k \ge 1$.
Note that in the proof of Theorem~\ref{teo:zeta2k} we did not need to use \eqref{eq:arctan} or any similar expression.

With the setup complete, we turn to the main result of this section. 

\begin{theorem}
\label{teo:beta2k+1}
For each positive integer $k$, we have
\begin{equation}
\label{eq:beta2k+1}
  \beta(2k+1) = \sum_{m=0}^\infty \frac{(-1)^m}{(2m+1)^{2k+1}} 
  = \frac{(-1)^{k} \pi^{2k+1}}{2^{2k+2} (2k)!} \,E_{2k},
\end{equation}
where $E_{2k} = 2^{2k}E_{2k}(1/2)$ denotes the Euler numbers of even index.
\end{theorem}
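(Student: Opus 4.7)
The plan mirrors the proof of Theorem~\ref{teo:zeta2k}: use the trigonometric identity~\eqref{eq:idtrigEuler} to turn the target series into a telescoping one, and then invoke Lemma~\ref{lem:RL} to kill the tail. Since by Lemma~\ref{lem:IkmEuler} only the odd-index $I_{k,m}$ are nonzero, I would start from
$$\sum_{j=0}^\infty (-1)^j I_{k,2j+1} = \frac{2(-1)^k(2k)!}{\pi^{2k+1}}\,\beta(2k+1).$$
Applying \eqref{eq:idtrigEuler} with $t=\pi x$, multiplying by $(-1)^j$, and summing from $j=0$ to $N$, the inner $\sin(2j\pi x)$ terms cancel in pairs, leaving
$$\sum_{j=0}^N (-1)^j \sin((2j+1)\pi x) = \frac{(-1)^N \sin(2(N+1)\pi x)}{2\cos(\pi x)}.$$

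The main obstacle, directly analogous to the one in Section~\ref{sec:sumB}, is that $1/\cos(\pi x)$ has a singularity \emph{inside} $[0,1]$, now at $x=1/2$. I would resolve it in the same spirit: set $e_{2k}(x) := E_{2k}(x) - E_{2k}(1/2)$ and $I'_{k,m} := \int_0^1 e_{2k}(x)\sin(m\pi x)\,dx$. Because $e_{2k}(1/2)=0$, the quotient $e_{2k}(x)/(2\cos(\pi x))$ extends to a continuously differentiable function on $[0,1]$, so Lemma~\ref{lem:RL} with $R=2(N+1)\pi$ gives $\sum_{j=0}^\infty (-1)^j I'_{k,2j+1} = 0$ after the telescoping identity above is applied to the modified integrand.

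What is genuinely different from the Bernoulli case is that subtracting a constant now \emph{does} alter the coefficient integrals, since for odd $m$ one has $\int_0^1 \sin(m\pi x)\,dx = 2/(m\pi)$. Hence $I'_{k,2j+1} = I_{k,2j+1} - 2E_{2k}(1/2)/((2j+1)\pi)$, and summing with alternating signs produces the series $\sum_{j\ge 0} (-1)^j/(2j+1)$, whose value $\pi/4$ is supplied by~\eqref{eq:arctan}; this is precisely why the arctangent series is needed here and was not needed in Section~\ref{sec:sumB}. Combining the pieces,
$$0 = \frac{2(-1)^k(2k)!}{\pi^{2k+1}}\,\beta(2k+1) - \frac{E_{2k}(1/2)}{2},$$
and substituting $E_{2k} = 2^{2k}E_{2k}(1/2)$ yields \eqref{eq:beta2k+1}.
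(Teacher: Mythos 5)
Your proposal is correct and follows essentially the same route as the paper's proof: subtract the constant $E_{2k}(1/2)$ to make the integrand vanish at the singularity $x=1/2$ of $1/\cos(\pi x)$, telescope via \eqref{eq:idtrigEuler} with the alternating sign, kill the boundary term with Lemma~\ref{lem:RL} at $R=(2N+2)\pi$, and recover the extra contribution through $\sum_{j\ge 0}(-1)^j/(2j+1)=\pi/4$ from \eqref{eq:arctan}. The only difference is cosmetic (you carry out the telescoping as a closed-form trigonometric identity before integrating, while the paper telescopes the integrals directly), and your final bookkeeping with $E_{2k}=2^{2k}E_{2k}(1/2)$ matches \eqref{eq:beta2k+1}.
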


\begin{proof}
Let $e_{2k}(x) = E_{2k}(x)-E_{2k}(1/2)$. Since
\[
  \int_0^1 E_{2k}(1/2) \sin((2m+1)\pi x) \,dx = \frac{2E_{2k}(1/2)}{(2m+1)\pi},
\]
we have
\[
  I'_{k,2m+1} := \int_0^1 e_{2k}(x)\sin((2m+1)\pi x)\,dx 
  = I_{k,2m+1} - \frac{2E_{2k}(1/2)}{(2m+1)\pi}.
\]
Now, by~\eqref{eq:evalIntEuler},
\begin{align*}
  & \frac{2(-1)^{k}(2k)!}{\pi^{2k+1}} \sum_{m=0}^{\infty} \frac{(-1)^m}{(2m+1)^{2k+1}} 
  = \sum_{m=0}^{\infty} (-1)^m I_{k,2m+1} \\
  &\qquad\qquad
  = \sum_{m=0}^{\infty} (-1)^m I'_{k,2m+1} + \frac{2E_{2k}(1/2)}{\pi} \sum_{m=0}^{\infty} \frac{(-1)^m}{2m+1}.
\end{align*}
We already know that $\sum_{m=0}^{\infty} \frac{(-1)^m}{2m+1} = \pi/4$, so let us take a look at $\sum_{m=0}^{\infty} (-1)^m I'_{k,2m+1}$.

Applying the trigonometric identity~\eqref{eq:idtrigEuler} and canceling terms in the resulting telescoping series, we obtain the following (note that the denominator $\cos(\pi x)$ only vanishes at $x=1/2$, where $e_{2k}(x)$ does so as well):
\begin{align*}
  &\sum_{m=0}^{\infty} (-1)^m I'_{k,2m+1} 
  = \sum_{m=0}^{\infty} (-1)^m \int_0^{1} e_{2k}(x) \sin((2m+1)\pi x)\,dx \\
  &\qquad= \lim_{N \to \infty} \sum_{m=0}^{N} (-1)^m \Bigg( 
    \int_0^{1} e_{2k}(x) \frac{\sin((2m+2)\pi x)}{2\cos(\pi x)}\,dx
    + \int_0^{1} e_{2k}(x) \frac{\sin(2m\pi x)}{2\cos(\pi x)}\,dx \Bigg) \\
  &\qquad= \lim_{N \to \infty} \sum_{m=0}^{N} \Bigg(
    (-1)^{m} \int_0^{1} e_{2k}(x) \frac{\sin((2m+2)\pi x)}{2\cos(\pi x)}\,dx
    - (-1)^{m-1} \int_0^{1} e_{2k}(x) \frac{\sin(2m\pi x)}{2\cos(\pi x)}\,dx \Bigg) \\
  &\qquad= \lim_{N \to \infty} (-1)^N \int_0^{1} e_{2k}(x) \frac{\sin((2N+2)\pi x)}{2\cos(\pi x)}\,dx
    + \int_0^{1} e_{2k}(x) \frac{\sin(0 \cdot \pi x)}{2\cos(\pi x)}\,dx.
\end{align*}
The last term is obviously null.

Let us check that
\[
  \lim_{N \to \infty} \int_0^{1} e_{2k}(x) \frac{\sin((2N+2)\pi x)}{2\cos(\pi x)}\,dx = 0.
\]
The function
\[
  f(x) = \frac{e_{2k}(x)}{2\cos(\pi x)}, \qquad x \in [0,1] \setminus \{1/2\},
\] 
extends by continuity to $x=1/2$ (recall that $e_{2k}(0)=0$) and is continuously differentiable on $[0,1]$. Setting $R = (2N + 2)\pi$ and applying Lemma~\ref{lem:RL} we conclude that the limit as $N \to \infty$ is also~$0$.

Thus we have proved that
\[
\frac{2(-1)^{k}(2k)!}{\pi^{2k+1}} \sum_{m=0}^{\infty} \frac{(-1)^m}{(2m+1)^{2k+1}}
= 0 + 0 + \frac{2E_{2k}(1/2)}{\pi} \frac{\pi}{4},
\]
hence
\[
  \beta(2k+1) = \frac{(-1)^{k}\pi^{2k+1}}{2(2k)!} \frac{E_{2k}(1/2)}{2} 
  = \frac{(-1)^{k}\pi^{2k+1} E_{2k}}{2^{2k+2} (2k)!},
\]
which concludes the proof.
\end{proof}

As in the proof of the formula for $\zeta(2k)$, this method contains thinly disguised Fourier analysis. The integrals $I_{k,2m+1}$ evaluated in Lemma~\ref{lem:IkmEuler} are the Fourier coefficients of the even index Euler polynomials $E_{2k}(x)$, whose Fourier series is \eqref{eq:HurE2k}. Evaluating this series at $x = 1/2$ yields~\eqref{eq:beta2k+1} immediately, but our proof does not require any knowledge of this, as again a convenient telescoping series may be found that achieves what we want.


\begin{remark}
What happens if we use this method to try to evaluate $\beta(2k)$? The situation is analogous to what happened with $\zeta(2k+1)$. Taking
\[
  J_{k,m} = \int_0^{1} E_{2k+1}(x) \cos(m\pi x)\,dx
\]
and proceeding as in Lemma~\ref{lem:IkmEuler} we obtain
\[
  J_{k,m} = 
  \begin{cases}
  0, & m=0,2,4,6\dots,\\
  \dfrac{2(-1)^{k-1}(2k+1)!}{m^{2k+2} \pi^{2k+2}}, & m=1,3,5,\dots.
  \end{cases}
\]
Using the analogous trigonometric identity
\[
  \cos((2m+1)t) = \frac{\cos((2m+2)t)+\cos(2mt)}{2\cos(t)}, 
\]
and continuing as in the proof of Theorem~\ref{teo:beta2k+1}, also yields a telescoping series, with null limit in $N$, but the remaining integral is not elementary. The analogous formula is
\[
  \beta(2k+2) = \frac{(-1)^{k-1}\pi^{2k+2}}{4(2k+1)!}
  \int_0^{1} \frac{E_{2k+1}(x)}{\cos(\pi x)}\,dx,
  \quad k = 0,1,2,\dots,
\]
but again, this does not provide immediate information about these values.
\end{remark}

\section{Apostol-Bernoulli and Apostol-Euler polynomials}
\label{sec:ABAEsin}

Our aim now is to study series of a more general form than~\eqref{eq:dosseries}. We add a real parameter $\mu \in \R$ and consider the series~\eqref{eq:summus}.  
A few remarks are in order. First, we can avoid values of $\mu$ that lead to the vanishing of a denominator in the general term of the series, or we can suppress the unique summand where that happens.
Note also that the index of summation in~\eqref{eq:summus} runs over the complete set of integers $\Z$, not just over $\N$, as was the case with~\eqref{eq:dosseries}. Straightforward manipulation transforms a series of type \eqref{eq:summus} into one where the index of summation runs over $\N$, especially in easy cases such as $\mu = 0$ or in general $\mu \in \Z$, removing the summand where a denominator vanishes. In particular this shows that the series in~\eqref{eq:dosseries} are special cases of those in~\eqref{eq:summus}. We will not go into more detail on these points at the moment. 

Just as we used Bernoulli and Euler polynomials to some the series~\eqref{eq:dosseries}, we shall now use Apostol-Bernoulli and Apostol-Euler polynomials to sum~\eqref{eq:summus}.
We recall that these polynomials are defined respectively by
\begin{equation*}
  \frac{z e^{xz}}{\lambda e^z-1} 
  = \sum_{k=0}^\infty \calB_k(x;\lambda)\, \frac{z^k}{k!},
  \qquad \lambda \in \C,
\end{equation*}
and
\begin{equation}
\label{eq:GF-AE}
  \frac{2 e^{zx}}{\lambda e^z+1}
  = \sum_{k=0}^\infty \calE_k(x;\lambda)\, \frac{z^k}{k!},
  \qquad \lambda \in \C \setminus \{-1\}.
\end{equation}
The case $\lambda=0$ is trivial and will not be considered, while for $\lambda=1$ the Apostol-Bernoulli polynomials $\calB_k(x;1)$ coincide with the Bernoulli polynomials $B_k(x)$, and the Apostol-Euler polynomials $\calE_k(x;1)$ are the Euler polynomials $E_k(x)$.

In addition, it follows easily from the definition that
\begin{equation}
\label{eq:relAB-AE}
  \calE_{k}(x;\lambda) = - \frac{2}{k+1} \calB_{k+1}(x;-\lambda);
\end{equation}
so that it can be said that these are basically the same polynomial family except for a sign change in the parameter, a shift in the index, and a scaling factor. Apparently, even though~\eqref{eq:relAB-AE} is not hard to discover, no one seems to have done so before~\cite{NRV}; more details are given in~\cite{NRV-Brno}.

In any case, it suffices to consider either only $\calB_k(x;\lambda)$ or only $\calE_k(x;\lambda)$ .
Now, excepting the case $\lambda=1$, $\calB_k(x;\lambda)$ has degree $k-1$, so it is more convenient to use $\calE_{k}(x;\lambda)$ which has degree~$k$.

It is important to note that~\eqref{eq:GF-AE} is not well-defined for $\lambda=-1$, since
${2 e^{zx}}/(-e^z+1)$ has a pole at $z=0$ and hence does not have a Taylor expansion (one would need to talk about Laurent expansions). Thus we can't simply study the case $\calB_{k}(x;1)$ using $\calE_{k}(x;-1)$. This is not a problem since $\calB_{k}(x;1) = B_{k}(x)$ has already been dealt with in Section~\ref{sec:sumB}.

Although our goal now is to evaluate series of the type~\eqref{eq:summus} using only the definitions of the corresponding polynomials via generating functions and coming up with a suitable telescoping series,  we may point out that the Apostol-Bernoulli and Apostol-Euler polynomials have Fourier series expansions similar to those of the Bernoulli and Euler polynomials. Indeed (see~\cite{NRV}), the Fourier series of the Apostol-Bernoulli polynomials is
\begin{equation*}
  \calB_k(x;\lambda) = -\delta_k(x;\lambda) - \frac{k!}{\lambda^x} \sum_{m \in \Z \setminus\{0\}} 
  \frac{e^{2\pi i m x}}{(2\pi i m - \log \lambda)^k},
  \qquad 0 < x < 1,
\end{equation*}
where $\delta_k(x;\lambda) = 0$ or $\frac{(-1)^k k!}{\lambda^x \log^k \lambda}$ according as $\lambda = 1$ or $\lambda \ne 1$. The case $\lambda=1$ corresponds to the Bernoulli polynomials. Since we will only consider the case $\lambda\ne1$, we can rewrite it as
\begin{equation}
\label{eq:FS-AB}
  \calB_k(x;\lambda) = - k! \sum_{m \in \Z} 
  \frac{e^{(2\pi i m - \log\lambda) x}}{(2\pi i m - \log\lambda)^k},
  \qquad 0 < x < 1.
\end{equation}
The corresponding Fourier series of the Apostol-Euler polynomials is
\begin{equation}
\label{eq:FS-AE}
\begin{aligned}
    \calE_k(x;\lambda) 
  &= \frac{2 \cdot k!}{\lambda^x} \sum_{m \in \Z} 
    \frac{e^{(2m+1)\pi i x}}{((2m+1)\pi i - \log \lambda)^{k+1}} \\
  &= 2 \cdot k! \sum_{m \in \Z} 
    \frac{e^{((2m+1)\pi i - \log\lambda) x}}{((2m+1)\pi i - \log\lambda)^{k+1}},
  \qquad 0 < x < 1,
\end{aligned}
\end{equation}
with $\lambda = 1$ corresponding to the Euler polynomials.

In order to link the polynomials with parameter $\lambda$ to the series with parameter $\mu$ we need to let $\lambda=e^{i\mu}$, so that $\log\lambda = i\mu$, with $\mu \in \R$. 

As was the case before, determining the values of the series \eqref{eq:summus} reduces to substituting a specific value of $x$ in the above Fourier series. To reiterate, although we mention this as motivation, our procedure is to avoid this by finding elementary methods based on telescoping series.

The Apostol-Bernoulli and Apostol-Euler polynomials satisfy a number of properties similar to those of the Bernoulli and Euler polynomials, some of which were used in Section~\ref{sec:propBE}. Indeed, these are all examples of Appell sequences, which satisfy a number of universal relations (see \cite{NRV, NRV18, NRV19, NRV-Brno}). For example,
\[
  \frac{d}{dx} \calE_k(x;\lambda) = k \calE_{k-1}(x;\lambda).
\]
Instead of making a list of properties here to be used later on, we will introduce them as needed.

\section{Apostol-Euler polynomials to sum an ``alternating series''}
\label{sec:mualt}

Setting $\lambda = e^{\mu i}$, $\mu \in \R$, we want to find a closed formula for the sum
\begin{equation}
\label{eq:calZ}
  \calZ(k;\mu) := \sum_{m\in\Z} \frac{(-1)^m}{((2m+1)\pi - \mu)^{k+1}},
\end{equation}
in terms of the Apostol-Euler polynomials $\calE_k(x;\lambda)$. We have to avoid $\lambda = -1$, where they are not defined, so that we cannot take $\mu=\pi$ nor in general $\mu=(2l+1)\pi$ with $l \in \Z$;
this also avoids vanishing denominators in the series. Without loss of generality, we can assume that $\mu \in (-\pi,\pi)$.

The aim of this section is to prove the following result:

\begin{theorem}
\label{teo:summu}
For $\mu \in (-\pi,\pi)$ and $k=0,1,2,\dots$, we have
\begin{equation}
\label{eq:summu}
  \calZ(k;\mu) = \sum_{m\in\Z} \frac{(-1)^m}{((2m+1)\pi - \mu)^{k+1}}
  = \frac{1}{2 \cdot k!} \, i^k  e^{i\mu/2} \calE_{k}(1/2;e^{i\mu}),
\end{equation}
where $\calE_{k}(x;\lambda)$ are the Apostol-Euler polynomials.
\end{theorem}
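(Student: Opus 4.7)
The plan is to parallel the proof of Theorem~\ref{teo:beta2k+1} using complex exponentials in place of real trigonometric functions. With $\lambda = e^{i\mu}$ and $\alpha_m := (2m+1)\pi - \mu$, I would introduce the auxiliary integral
\[
  I_{k,m}(\lambda) := \int_0^1 \calE_k(x;\lambda)\, e^{-i\alpha_m x}\,dx,
\]
first evaluate $I_{k,m}$ explicitly, and then build a telescoping series whose $N \to \infty$ limit is killed by a Riemann--Lebesgue argument, arriving at the claimed identity.

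To evaluate the integral, I would observe from~\eqref{eq:GF-AE} (adding $\lambda$ times the series at $x=1$ to the series at $x=0$) the key boundary relation $\lambda \calE_k(1;\lambda) + \calE_k(0;\lambda) = 2\delta_{k,0}$. Since $e^{-i\alpha_m} = -\lambda$, integration by parts on $I_{k,m}$ using $\calE_k' = k\calE_{k-1}$ produces a boundary term proportional to $\lambda\calE_k(1;\lambda)+\calE_k(0;\lambda)$, which vanishes for $k \ge 1$, together with the recurrence $I_{k,m} = -i k\, I_{k-1,m}/\alpha_m$. A direct evaluation at $k=0$ (where $\calE_0 \equiv 2/(\lambda+1)$) gives $I_{0,m} = -2i/\alpha_m$, and induction then yields $I_{k,m} = 2\,k!\,(-i)^{k+1}/\alpha_m^{k+1}$, so that $\sum_{m\in\Z}(-1)^m I_{k,m}(\lambda) = 2\,k!\,(-i)^{k+1}\calZ(k;\mu)$.

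For the telescoping step, the complex analogue of~\eqref{eq:idtrigEuler} is
\[
  (-1)^m e^{-i(2m+1)\pi x}\cdot 2\cos(\pi x) = b_{m+1}(x) - b_m(x), \qquad b_m(x) := (-1)^{m+1} e^{-2i m\pi x},
\]
easily checked by direct expansion. Because $1/\cos(\pi x)$ has a pole at $x=1/2$, I would, exactly as in the proof of Theorem~\ref{teo:beta2k+1}, subtract the value $\calE_k(1/2;\lambda)$ and work with $\tilde\calE_k(x;\lambda) := \calE_k(x;\lambda) - \calE_k(1/2;\lambda)$: its simple zero at $x=1/2$ cancels that of $\cos(\pi x)$, so $\tilde\calE_k(x;\lambda)/\cos(\pi x)$ is smooth on $[0,1]$. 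After multiplying by $e^{i\mu x}$ and summing the telescoping identity from $m=-N$ to $N$, the partial sum $\sum_{m=-N}^N (-1)^m \int_0^1 \tilde\calE_k(x;\lambda) e^{-i\alpha_m x}\,dx$ collapses to a single integral with oscillatory factor $(-1)^N\cos((2N+1)\pi x)$ multiplying a $C^1$ function, which tends to $0$ by the cosine analogue of Lemma~\ref{lem:RL}. Restoring the subtracted constant via $\int_0^1 e^{-i\alpha_m x}\,dx = (1+\lambda)/(i\alpha_m)$, this yields
\[
  2\,k!\,(-i)^{k+1}\calZ(k;\mu) = \calE_k(1/2;\lambda)\cdot\frac{1+\lambda}{i}\,\calZ(0;\mu).
\]

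The hard part will be the $k=0$ base case: the telescoping collapses to the tautology $0=0$ when $\calE_0(\cdot;\lambda)$ is constant (so $\tilde\calE_0 \equiv 0$), and one therefore needs the independent identity $\calZ(0;\mu) = 1/(2\cos(\mu/2))$, which is essentially the partial fraction expansion of the secant. I would establish this in a separate lemma, for instance by deriving it from Euler's partial fraction expansion of $\pi/\sin(\pi z)$ through the shift $z \mapsto z+1/2$, in the same spirit as the $1/\sin^2$ expansion developed in Section~\ref{sec:parfrac}. Granting this, the half-angle simplification $(1+e^{i\mu})/(2\cos(\mu/2)) = e^{i\mu/2}$ together with the arithmetic identity $1/(i(-i)^{k+1}) = i^k$ transforms the displayed equation into the claimed formula $\calZ(k;\mu) = i^k e^{i\mu/2}\calE_k(1/2;e^{i\mu})/(2\,k!)$.
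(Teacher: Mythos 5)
Your proposal is correct and takes essentially the same route as the paper's own proof: your $I_{k,m}(\lambda)$ is exactly the paper's $\calI_{k,m}$ (since $\lambda^{x}e^{-(2m+1)\pi i x}=e^{-i\alpha_m x}$), evaluated by the same integration by parts and boundary identity $\lambda\calE_k(1;\lambda)+\calE_k(0;\lambda)=0$ for $k\ge 1$ (Proposition~\ref{prop:FC-AE}), followed by the same subtraction of $\calE_k(1/2;\lambda)$, the same telescoping identity $e^{-(2m+2)\pi i x}+e^{-2m\pi i x}=2\cos(\pi x)\,e^{-(2m+1)\pi i x}$, and the same Riemann--Lebesgue step via Lemma~\ref{lem:RL}. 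The only divergence is the auxiliary evaluation $\calZ(0;\mu)=\frac{1}{2\cos(\mu/2)}$, which you would import from Euler's expansion of $\pi/\sin(\pi z)$, whereas the paper obtains it self-containedly by termwise differentiation and integration of $\sum_{m\in\Z}\frac{(-1)^m}{(2m+1)\pi-\mu}\,x^{(2m+1)\pi-\mu}$ (geometric series), a minor variation that does not change the argument.
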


For Apostol-Bernoulli polynomials, \cite[Proposition~1]{NRV} shows that for any $\lambda \in \C \setminus \{0,1\}$, $m \in \Z$ and $k \in \N$, we have
\begin{equation*}
\int_0^1 \lambda^x \calB_{k}(x;\lambda)e^{-2\pi imx}\,dx
= -\frac{k!}{(2\pi i m - \log \lambda)^{k}}.
\end{equation*}
Of course, behind the scenes, we are really calculating the Fourier coefficients in~\eqref{eq:FS-AB}, but we do not need to know this since we are purposefully avoiding Fourier analysis.

Similarly, for the Apostol-Euler polynomials (we are now computing the Fourier coefficients in~\eqref{eq:FS-AE}, but we do not need to know it), we have the following.

\begin{proposition}
\label{prop:FC-AE}
For any $\lambda \in \C \setminus \{0,-1\}$, $m \in \Z$ and $k \in \N \cup \{0\}$, we have
\begin{equation}
\label{eq:FC-AE}
\calI_{k,m} := \int_0^1 \lambda^x\calE_{k}(x;\lambda)e^{-(2m+1)\pi ix}\,dx
= \frac{2 \cdot k!}{((2m+1)\pi i - \log \lambda)^{k+1}}.
\end{equation}
\end{proposition}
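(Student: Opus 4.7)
The plan is to package all of $\calI_{k,m}$ for $k=0,1,2,\dots$ into a single generating function identity, and then read off the coefficient of $z^k/k!$. Specifically, multiply the defining expansion
\[
  \sum_{k=0}^{\infty} \calE_k(x;\lambda)\, \frac{z^k}{k!} = \frac{2e^{zx}}{\lambda e^z+1}
\]
by $\lambda^x e^{-(2m+1)\pi i x}$ and integrate over $x \in [0,1]$. Interchanging the sum and the integral produces
\[
  \sum_{k=0}^{\infty} \calI_{k,m}\, \frac{z^k}{k!}
  = \int_0^1 \frac{2\, e^{x(z+\log\lambda-(2m+1)\pi i)}}{\lambda e^z+1}\,dx,
\]
where I used $\lambda^x = e^{x\log\lambda}$. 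The right-hand side is elementary once one observes the telescoping identity $e^{-(2m+1)\pi i}=-1$: the $x$-integration yields
\[
  \frac{2}{\lambda e^z+1}\cdot \frac{e^{z+\log\lambda-(2m+1)\pi i}-1}{z+\log\lambda-(2m+1)\pi i}
  = \frac{2}{\lambda e^z+1}\cdot \frac{-(\lambda e^z+1)}{z+\log\lambda-(2m+1)\pi i}
  = \frac{2}{(2m+1)\pi i-\log\lambda - z}.
\]

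Now I would expand this as a geometric series in $z$ around $z=0$: setting $w=(2m+1)\pi i-\log\lambda$ (nonzero since $\lambda \ne -1$, because the only way $w=0$ is $\log\lambda=(2m+1)\pi i$, i.e. $\lambda=-1$), for $|z|<|w|$ one has
\[
  \frac{2}{w-z} = \sum_{k=0}^{\infty} \frac{2}{w^{k+1}}\, z^k
  = \sum_{k=0}^{\infty} \frac{2\cdot k!}{w^{k+1}}\cdot \frac{z^k}{k!}.
\]
Comparing coefficients of $z^k/k!$ with the left-hand side gives the claimed formula \eqref{eq:FC-AE}.

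The one place requiring care is the interchange of the integral and the infinite sum. The function $(x,z)\mapsto 2e^{zx}/(\lambda e^z+1)$ is holomorphic in $z$ in a neighborhood of $z=0$ (uniformly in $x \in [0,1]$, since $\lambda e^z+1$ does not vanish at $z=0$ for $\lambda \ne -1$), so for $z$ in a sufficiently small closed disk $|z|\le r$ the Taylor expansion in $z$ converges absolutely and uniformly for $x \in [0,1]$. Multiplying by the bounded factor $\lambda^x e^{-(2m+1)\pi i x}$ preserves uniform convergence, which legitimises the term-by-term integration. This is the main (and only) analytic subtlety; after it is settled the proof reduces to the elementary calculation sketched above.
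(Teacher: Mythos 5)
Your proof is correct, but it takes a different route from the paper. The paper proves \eqref{eq:FC-AE} by induction on $k$: the case $k=0$ is a direct computation with $\calE_0(x;\lambda)=2/(1+\lambda)$, and the inductive step is a single integration by parts using $\calE_k'(x;\lambda)=k\,\calE_{k-1}(x;\lambda)$, where the boundary term is killed by the identity $\lambda\calE_k(1;\lambda)+\calE_k(0;\lambda)=0$ for $k\ge 1$ (itself read off from the generating function, since $\lambda g(1,\lambda,z)+g(0,\lambda,z)=2$). You instead integrate the generating function \eqref{eq:GF-AE} against $\lambda^x e^{-(2m+1)\pi i x}$, exploit $e^{-(2m+1)\pi i}=-1$ so that the factor $\lambda e^z+1$ cancels, and identify the result $2/\bigl((2m+1)\pi i-\log\lambda-z\bigr)$ with its geometric series, obtaining all $k$ simultaneously by uniqueness of Taylor coefficients; your cancellation is in fact the generating-function avatar of the same boundary identity the paper uses. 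What your route buys is a one-shot computation with no induction and no separate boundary lemma; what it costs is the justification of the sum--integral interchange, which you correctly reduce to uniform convergence in $x$ of the Taylor series on a small disc $|z|\le r$ (via boundedness of $e^{zx}$ and of $1/(\lambda e^z+1)$ there, i.e.\ Cauchy estimates uniform in $x$), together with the observation that $(2m+1)\pi i-\log\lambda\ne 0$ precisely because $\lambda\ne -1$ (with the principal logarithm, as the paper uses). This extra analytic step is consistent with the paper's own caveat that the Apostol sections require some care with termwise operations on series, whereas the paper's induction stays entirely within finitely many integrations by parts; one small stylistic remark is that your phrase ``telescoping identity'' for $e^{-(2m+1)\pi i}=-1$ is a misnomer, since nothing telescopes at that point.
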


\begin{proof}
Fixing $m$, we use induction on $k$. One easily checks the case $k=0$, noting that
$\calE_0(x;\lambda) = 2/(1+\lambda)$. Let $k \geq 1$. Assuming \eqref{eq:FC-AE} is true for $k-1$, integrate it by parts, using the derivative formula $\calE'_{k}(x;\lambda) = k\calE_{k-1}(x;\lambda)$ to
obtain
\[
\int_0^1 \lambda^x\calE_{k}(x;\lambda)e^{-(2m+1)\pi ix}\,dx
= \frac{\lambda\calE_{k}(1;\lambda)+\calE_{k}(0;\lambda)}{(2m+1)\pi i - \log \lambda}
+ \frac{2\cdot k!}{((2m+1)\pi i - \log \lambda)^{k+1}}.
\]
The proof is concluded by checking that $\lambda\calE_{k}(1;\lambda) + \calE_{k}(0;\lambda) = 0$ for $k \geq 1$. This follows from
\[
g(x,\lambda,z) \stackrel{\text{\tiny def}}{=} \frac{2 e^{xz}}{\lambda e^z+1} 
= \sum_{k=0}^\infty \calE_k(x;\lambda)\, \frac{z^k}{k!}
\]
noting that $\lambda g(1,\lambda,z) + g(0,\lambda,z) = 2$. 
\end{proof}

Now, let us prove Theorem~\ref{teo:summu}, proceeding as in the proof of Theorem~\ref{teo:beta2k+1} (we will not repeat all the details). Using the notation in~\eqref{eq:FC-AE}, we have
\[
  \frac{2\cdot k!}{i^{k+1}} \calZ(k;\mu) =  \sum_{m\in\Z} (-1)^m \calI_{k,m}.
\]

Define $e_{k}(x;\lambda) = \calE_{k}(x;\lambda)-\calE_{k}(1/2;\lambda)$, so that $e_{k}(1/2;\lambda)=0$. Note that
\begin{align*}
  & \int_0^1 \lambda^x \calE_{k}(1/2;\lambda) e^{-(2m+1)\pi ix}\,dx 
  = \calE_{k}(1/2;\lambda) \int_0^1 e^{(-(2m+1)\pi + \mu) ix}\,dx \\
  &\qquad= \calE_{k}(1/2;\lambda) \frac{e^{(-(2m+1)\pi + \mu) ix}}{(-(2m+1)\pi + \mu) i}\bigg|_{x=0}^1 
  = \calE_{k}(1/2;\lambda) \frac{e^{-\pi i + \mu i}-1}{(-(2m+1)\pi + \mu) i} \\
  &\qquad= \calE_{k}(1/2;\lambda) \frac{-\lambda-1}{(-(2m+1)\pi + \mu) i} 
  = \frac{(1+\lambda)\calE_{k}(1/2;\lambda)}{((2m+1)\pi-\mu) i},
\end{align*}
hence
\[
  \calI'_{k,m} := \int_0^1 \lambda^x e_{k}(x;\lambda) e^{-(2m+1)\pi ix} \,dx 
  = \calI_{k,m} + \frac{(1+\lambda)\calE_{k}(1/2;\lambda)}{(2m+1)\pi-\mu} \, i.
\]
Thus we have
\begin{align*}
  \frac{2\cdot k!}{i^{k+1}} \calZ(k;\mu) &=  \sum_{m\in\Z} (-1)^m \calI_{k,m} \\
  &= \sum_{m\in\Z} (-1)^m \calI'_{k,m} 
  - (1+\lambda)\calE_{k}(1/2;\lambda) i \sum_{m\in\Z} \frac{(-1)^m}{(2m+1)\pi-\mu}.
\end{align*}

We need to evaluate $\sum_{m\in\Z} \frac{(-1)^m}{(2m+1)\pi-\mu}$. To do so, consider instead
\[
  \sum_{m\in\Z} \frac{(-1)^m}{(2m+1)\pi-\mu} \,x^{(2m+1)\pi-\mu},
\] 
which plays a role analogous to~\eqref{eq:arctan}. Separating the sum into positive ($m \geq 0$) and negative ($m \leq -1$) powers, we obtain two series which when differentiated term-by-term yield geometric series which are easily summed. Now, integrating term-by-term (adequately choosing the integration constants) and letting $x=1$, we arrive at
\begin{align*}
  \sum_{m\in\Z} \frac{(-1)^m}{(2m+1)\pi-\mu}
  &= \frac{1}{4} \left(\cot \left(\frac{\mu +\pi }{4}\right) 
  - \cot \left(\frac{\mu -\pi}{4}\right)\right) \\
  &= \sin \left(\frac{\mu}{2}\right) \csc (\mu)
  = \frac{1}{2} \sec \left(\frac{\mu}{2}\right)
  = \frac{1}{2\cos(\mu/2)}.
\end{align*}

Now, we need to find a telescoping series that allows us to sum
\[
  \sum_{m\in\Z} (-1)^m \calI'_{k,m}
  = \sum_{m\in\Z} (-1)^m \int_0^1 \lambda^x e_{k}(x;\lambda) e^{-(2m+1)\pi ix} \,dx.
\]
To do this, it suffices to observe that
\[
  e^{-(2m+2)\pi ix} + e^{-(2m)\pi ix} = e^{-(2m+1)\pi ix} (e^{-\pi ix} + e^{\pi ix}) 
  = e^{-(2m+1)\pi ix} \cdot 2 \cos(\pi x),
\]
hence
\begin{equation}
\label{eq:telescospi}
  e^{-(2m+1)\pi ix} = \frac{e^{-(2m+2)\pi ix}}{2 \cos(\pi x)} + \frac{e^{-(2m)\pi ix}}{2 \cos(\pi x)}.
\end{equation}
This achieves exactly what we need thanks to the alternating sign $(-1)^{m}$. Furthermore, the denominator $\cos(\pi x)$ vanishes at $x=1/2$ which is also where $e_{k}(x;\lambda)$ vanishes, just as we want. Moreover, the sum of the series $\sum_{m\in\Z} (-1)^m \calI'_{k,m}$ is~$0$, as can be seen by again applying the ``weak'' version of the Riemann-Lebesgue lemma (Lemma~\ref{lem:RL}) that we also used in Theorems~\ref{teo:zeta2k} and~\ref{teo:beta2k+1}.

Putting everything together, we obtain
\[
  \frac{2\cdot k!}{i^{k+1}} \calZ(k;\mu) = 
  - (1+\lambda)\calE_{k}(1/2;\lambda) i \sum_{m\in\Z} \frac{(-1)^m}{(2m+1)\pi-\mu}
  = \frac{- (1+\lambda)\calE_{k}(1/2;\lambda) i}{2 \cos(\mu/2)},
\]
and so
\[
  \calZ(k;\mu) = \frac{(1+\lambda)\calE_{k}(1/2;\lambda) }{4 \cdot k!\, \cos(\mu/2)}\,i^k
  = \frac{\lambda^{-1/2}+\lambda^{1/2}}{4 \cdot k!\, \cos(\mu/2)}\, i^k \lambda^{1/2}\calE_{k}(1/2;\lambda).
\]
Moreover, $\lambda^{-1/2}+\lambda^{1/2} = e^{-\mu i/2} + e^{\mu i/2} = 2 \cos(\mu/2)$, so we have proved Theorem~\ref{teo:summu}.

\subsection{Another expression for the sum}

As desired, Theorem~\ref{teo:summu} gives the value of $\calZ(k,m)$ in terms of the Apostol-Euler polynomials. But $\mu$ is assumed real while the sum in Theorem~\ref{teo:summu} has complex coefficients. It is interesting to try to find an alternative formula avoiding complex numbers. With this in mind, we have the following:

\begin{lemma}
\label{lem:Ekmu}
Let $\calE_k(x;\lambda)$ denote the Apostol-Euler polynomials as defined in~\eqref{eq:GF-AE}, with $\lambda = e^{i\mu}$, $\mu \in (-\pi,\pi)$. Then
\begin{equation*}
  \bfE _k(\mu) := i^k e^{i\mu/2} \calE_k(1/2;e^{i\mu}), \qquad k=0,1,2,\dots,
\end{equation*}
is always a real number. In fact, the values $\bfE_k(\mu)$ may be computed via the generating function
\begin{equation}
\label{eq:Ekmu}
  \frac{1}{\cos(w/2)} = \sum_{k=0}^{\infty} \bfE_k(\mu) \frac{(w-\mu)^k}{k!},
  \qquad |w - \mu| < \min |\mu \pm \pi| 
\end{equation}
and hence
\[
  \bfE_k(\mu) = \frac{d^k}{d\mu^k} \left(\frac{1}{\cos(\mu/2)}\right).
\]
\end{lemma}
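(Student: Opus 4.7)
The plan is to derive the generating function \eqref{eq:Ekmu} directly from the defining generating function \eqref{eq:GF-AE} of the Apostol-Euler polynomials, and then read off reality and the derivative formula as immediate corollaries. Specializing \eqref{eq:GF-AE} at $x=1/2$ and $\lambda = e^{i\mu}$ yields
\[
  \frac{2 e^{z/2}}{e^{i\mu} e^z + 1} = \sum_{k=0}^{\infty} \calE_k(1/2;e^{i\mu})\,\frac{z^k}{k!},
\]
valid in a neighborhood of $z=0$, since $\mu \in (-\pi,\pi)$ guarantees $e^{i\mu} \ne -1$. The key algebraic step is the factorization
\[
  e^{i\mu} e^z + 1 = e^{(i\mu+z)/2}\bigl(e^{(i\mu+z)/2} + e^{-(i\mu+z)/2}\bigr) = 2\,e^{(i\mu+z)/2} \cos\bigl((\mu - iz)/2\bigr),
\]
which uses $\cosh(iy) = \cos(y)$ together with the evenness of cosine. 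Dividing reduces the left-hand side of the specialized generating function to $e^{-i\mu/2}/\cos((\mu-iz)/2)$.

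Next I would multiply both sides by $e^{i\mu/2}$. Using the identity $e^{i\mu/2}\calE_k(1/2;e^{i\mu}) = i^{-k}\bfE_k(\mu) = (-i)^k\bfE_k(\mu)$ (recall that $1/i = -i$), the series side reorganizes as $\sum_k \bfE_k(\mu)(-iz)^k/k!$, giving
\[
  \frac{1}{\cos((\mu - iz)/2)} = \sum_{k=0}^{\infty} \bfE_k(\mu)\,\frac{(-iz)^k}{k!}.
\]
The change of variable $z = i(w-\mu)$ then converts $(\mu - iz)/2$ into $w/2$ and $(-iz)^k$ into $(w-\mu)^k$, producing exactly \eqref{eq:Ekmu}. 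The radius of convergence is the distance from $\mu$ to the nearest pole of $\sec(w/2)$; such poles occur at $w = (2l+1)\pi$, and for $\mu\in(-\pi,\pi)$ the closest two are $\pm\pi$, giving $|w-\mu| < \min|\mu\pm\pi|$ as claimed.

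With \eqref{eq:Ekmu} in hand, the remaining claims follow painlessly. The function $w \mapsto 1/\cos(w/2)$ is real-valued and real-analytic on the interval $(-\pi,\pi)$ containing $\mu$, so its Taylor coefficients at the real point $w=\mu$ are real, yielding $\bfE_k(\mu)\in\R$. Since $k!$ times the $k$-th Taylor coefficient of a function $g$ at $w=\mu$ equals $g^{(k)}(\mu)$, and renaming the dummy variable from $w$ to $\mu$ is harmless, one reads off $\bfE_k(\mu) = \frac{d^k}{d\mu^k}\bigl(1/\cos(\mu/2)\bigr)$.

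The only real obstacle is spotting the symmetry that lets $e^{i\mu}e^z + 1$ be rewritten as $2 e^{(i\mu+z)/2}\cos((\mu-iz)/2)$; once this factorization is in place, the substitution $z = i(w-\mu)$ and the subsequent appeals to standard Taylor-series facts are routine.
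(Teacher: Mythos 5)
Your proof is correct and follows essentially the same route as the paper: specialize \eqref{eq:GF-AE} at $x=1/2$, $\lambda=e^{i\mu}$, multiply by $e^{i\mu/2}$, and perform the substitution $z=i(w-\mu)$ (the paper's $z=is$, $w=s+\mu$), recognizing $2e^{iw/2}/(e^{iw}+1)=1/\cos(w/2)$. The only difference is cosmetic: you factor the denominator into a cosine before substituting, while the paper substitutes first and simplifies at the end.
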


\begin{proof}
Multiply both sides of \eqref{eq:GF-AE} (with $\lambda = e^{i\mu}$) by $e^{i\mu/2}$, set $x=1/2$ and $z = is$. Then we have
\[
  \frac{2 e^{(s+\mu)i/2}}{e^{(s+\mu)i}+1}
  = \sum_{k=0}^\infty \bfE_k(\mu) \frac{s^k}{k!}.
\]
Substituting $w=s+\mu$, this becomes
\[
  \frac{2 e^{wi/2}}{e^{wi}+1}
  = \sum_{k=0}^\infty \bfE_k(\mu) \frac{(w-\mu)^k}{k!},
\]
and the result follows if we show that $2 e^{wi/2}/(e^{wi}+1)$ is real when~$w$ is. 
This is easy:
\[
  \frac{2 e^{wi/2}}{e^{wi}+1}
  = \frac{2}{e^{-wi/2}(e^{wi}+1)}
  = \frac{2}{e^{wi/2}+e^{-wi/2}}
  = \frac{1}{\cos(w/2)},
\]
using Euler's formula $\cos\theta = (e^{i\theta}+e^{-i\theta})/2$.
\end{proof}



Then, as a consequence of Theorem~\ref{teo:summu} and Lemma~\ref{lem:Ekmu}, we have the following:

\begin{corollary}
\label{cor:summu}
For $\mu \in (-\pi,\pi)$ and $k=0,1,2,\dots$, we have
\[
  \calZ(k;\mu) = \sum_{m\in\Z} \frac{(-1)^m}{((2m+1)\pi - \mu)^{k+1}}
  = \frac{\bfE_k(\mu)}{2 \cdot k!}
\]
where the values $\bfE_k(\mu)$ are the coefficients of the generating function~\eqref{eq:Ekmu}.
\end{corollary}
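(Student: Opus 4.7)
The plan is to combine Theorem~\ref{teo:summu} and Lemma~\ref{lem:Ekmu} directly. Theorem~\ref{teo:summu} has already established that
\[
  \calZ(k;\mu) = \frac{1}{2 \cdot k!} \, i^k e^{i\mu/2} \calE_k(1/2; e^{i\mu}),
\]
while Lemma~\ref{lem:Ekmu} defines $\bfE_k(\mu)$ as precisely the expression $i^k e^{i\mu/2} \calE_k(1/2; e^{i\mu})$. Substituting this identification into the theorem yields the corollary in one step. There is no calculation beyond matching notation.

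The one conceptual point worth emphasizing in the write-up is the reality of the right-hand side. A priori, $\calZ(k;\mu)$ is a real number since every term of the defining series is real, so the factor $i^k e^{i\mu/2} \calE_k(1/2; e^{i\mu})$ in Theorem~\ref{teo:summu} is forced to be real even though this is not obvious from its definition. Lemma~\ref{lem:Ekmu} confirms this by identifying $\bfE_k(\mu)$ with the $k$-th derivative of the manifestly real function $\mu \mapsto 1/\cos(\mu/2)$. In that sense the corollary is really a repackaging: it says that, once the real-valued recognition in Lemma~\ref{lem:Ekmu} is made, the series $\calZ(k;\mu)$ is read off directly from the Taylor expansion of $\sec(w/2)$ about $w = \mu$.

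No obstacle is anticipated; the proof amounts to citing two earlier lines and collating them. As a sanity check I would verify the base case $k=0$: then $\bfE_0(\mu) = \sec(\mu/2)$, and the corollary predicts $\calZ(0;\mu) = \sec(\mu/2)/2$. Evaluating at $\mu = 0$ gives $\tfrac{1}{2}$, in agreement with $\sum_{m\in\Z} (-1)^m/((2m+1)\pi) = (2/\pi)\sum_{m\geq 0}(-1)^m/(2m+1) = 1/2$ via Leibniz's formula, which lends confidence that the identification in Lemma~\ref{lem:Ekmu} has been assembled with the correct factors of $i$ and $e^{i\mu/2}$.
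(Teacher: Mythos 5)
Your proposal is correct and matches the paper exactly: the corollary is stated there as an immediate consequence of Theorem~\ref{teo:summu} and Lemma~\ref{lem:Ekmu}, with no further argument needed, which is precisely your one-step substitution of $\bfE_k(\mu) = i^k e^{i\mu/2}\calE_k(1/2;e^{i\mu})$. The remarks on reality and the $k=0$ sanity check are consistent with the paper's discussion and require no changes.
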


\begin{remark}
Requiring $\mu \in (-\pi,\pi)$ is not essential, we could just as well assume $\mu \in \R \setminus \{(2m+1)\pi : m\in\Z\}$ and the formula $\calZ(k;\mu) = \bfE_k(\mu)/(2 \cdot k!)$ will still be valid. In particular, observe that $\calZ(k;\mu+2\pi) = -\calZ(k;\mu)$ (in the series this corresponds to $m \mapsto m-1$) and $\bfE_k(\mu+2\pi) = -\bfE_k(\mu)$ (in the generating function, $\cos((\mu+2\pi)/2) = -\cos(\mu/2)$).
\end{remark}

\begin{remark}
Once we have proved that
\[
  \calZ(0;\mu) = \sum_{m\in\Z} \frac{(-1)^m}{(2m+1)\pi - \mu}
  = \frac{\bfE_0(\mu)}{2} = \frac{1}{2\cos(w/2)},
\]
the general formula
\[
  \calZ(k;\mu) = \sum_{m\in\Z} \frac{(-1)^m}{((2m+1)\pi - \mu)^{k+1}}
  = \frac{\bfE_k(\mu)}{2 \cdot k!}
\]
follows by induction in $k$ via termwise differentiation with respect to $\mu$, since $\bfE_k(\mu) = \frac{d}{d\mu} \bfE_{k-1}(\mu)$.
\end{remark}

The first values of $\calZ(k;\mu)$ are as follows:
\begin{align*}
\calZ(0;\mu) &=
\frac{1}{2\cos(\frac{\mu}{2})},
\qquad
\calZ(1;\mu) =
\frac{\sin(\frac{\mu}{2})}{4\cos^2(\frac{\mu}{2})},
\qquad
\calZ(2;\mu) =
\frac{3 - \cos(\mu)}{32 \cos^3(\frac{\mu}{2})},
\\
\calZ(3;\mu) &=
\frac{23 \sin(\frac{\mu}{2}) - \sin(\frac{3\mu}{2})}{384 \cos^4(\frac{\mu}{2})},
\qquad
\calZ(4;\mu) =
\frac{115 - 76 \cos(\mu) + \cos(2\mu)}{6144 \cos^5(\frac{\mu}{2})},
\\
\calZ(5;\mu) &=
\frac{1682 \sin(\frac{\mu}{2}) - 237 \sin(\frac{3\mu}{2}) + \sin(\frac{5\mu}{2})}
  {122880 \cos^6(\frac{\mu}{2})},
\\
\calZ(6;\mu) &=
\frac{11774 - 10543 \cos(\mu) + 722 \cos(2\mu) - \cos(3\mu)}{2949120 \cos^7(\frac{\mu}{2})}.
\end{align*}
Except for $k=0$, the constant in the denominator is $2^{2k}k!$.

\section{A partial fraction expansion}
\label{sec:parfrac}

In 1748, in \S\,178 of his \textit{Introductio in Analysin Infinitorum} (see \cite{Eu-fac}), Euler proved that
\begin{equation}
\label{eq:sumtheta1}
\sum_{n\in\Z} \frac{1}{n+\theta} 
= \lim_{N \to \infty} \sum_{n=-N}^N \frac{1}{n+\theta} = \frac{\pi}{\tan(\pi\theta)},
\qquad \theta \in \R \setminus \Z.
\end{equation}
This is the partial fraction expansion of the cotangent function, and it is a remarkable result that requires defining the convergence of the series as the symmetric limit $\lim_{N \to \infty} \sum_{n=-N}^N$ (equivalently, grouping pairs of summands corresponding to opposite indices $\pm n$) because the series is not absolutely convergent. A detailed ``traditional'' proof can be seen, for instance, in \cite[\S\,24, equation 117]{Kn}; and, for a nice self-contained proof with detailed explanations, see~\cite{Hof}. Another kind of elementary proofs use the so-called Herglotz trick; this can be seen, for instance, in~\cite{AiZi-cot}.

It is clear that, with small adjustments, this series becomes
\begin{equation}
\label{eq:summu1}
\sum_{m\in\Z} \frac{1}{2m\pi-\mu} 
= \lim_{M \to \infty} \sum_{m=-M}^M \frac{1}{2m\pi-\mu} = \frac{-1}{2\tan(\mu/2)}.
\end{equation}
Our goal here is to give the value of the expansions~\eqref{eq:summus} for integer $s=k\ge2$ in terms of the Apostol-Euler polynomials. For this purpose, we will see in the next section that it is enough to start with
\begin{equation*}
\sum_{m\in\Z} \frac{1}{(2m\pi-\mu)^2}, 
\end{equation*}
whose value can be obtained from the following result:

\begin{proposition}
\label{prop:sumtheta2}
For $\theta \in \R \setminus \Z$, we have
\begin{equation}
\label{eq:sumtheta2}
\sum_{n\in\Z} \frac{1}{(n+\theta)^2} = \frac{\pi^2}{\sin^2(\pi\theta)}.
\end{equation}
\end{proposition}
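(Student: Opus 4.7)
The plan is to deduce \eqref{eq:sumtheta2} from Euler's identity \eqref{eq:sumtheta1} by differentiating both sides with respect to $\theta$. Formally, differentiating the identity $\sum_{n\in\Z} 1/(n+\theta) = \pi/\tan(\pi\theta)$ term by term yields
\[
  -\sum_{n\in\Z} \frac{1}{(n+\theta)^2} = \frac{d}{d\theta}\left(\frac{\pi\cos(\pi\theta)}{\sin(\pi\theta)}\right) = -\frac{\pi^2}{\sin^2(\pi\theta)},
\]
which is exactly the desired identity after canceling signs. So the substance of the proof lies entirely in justifying that this term-by-term differentiation is legitimate.

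To carry this out, I would fix a compact set $K \subset \R \setminus \Z$ and let $S_N(\theta) = \sum_{n=-N}^{N} 1/(n+\theta)$, so that by \eqref{eq:sumtheta1} we have $S_N(\theta) \to \pi\cot(\pi\theta)$ pointwise on $K$. Each $S_N$ is $C^1$ on $K$ with derivative $S_N'(\theta) = -\sum_{n=-N}^{N} 1/(n+\theta)^2$. For $|n|$ large enough depending on $K$, we have $|n+\theta| \geq |n|/2$ uniformly for $\theta \in K$, so the Weierstrass $M$-test (against $\sum 4/n^2$) implies that $S_N'$ converges uniformly on $K$ to $-\sum_{n\in\Z} 1/(n+\theta)^2$, where the latter series is absolutely convergent (no symmetric-limit subtlety here). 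The standard theorem for the termwise differentiation of a sequence of functions then asserts that the pointwise limit $\pi\cot(\pi\theta)$ is differentiable on $K$ with derivative equal to $\lim_N S_N'(\theta)$. A direct computation of $\frac{d}{d\theta}(\pi\cot(\pi\theta))$ gives $-\pi^2/\sin^2(\pi\theta)$, and comparing the two expressions for this derivative yields \eqref{eq:sumtheta2}.

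The main obstacle I expect is precisely this interchange: the starting series \eqref{eq:sumtheta1} is only conditionally convergent and is defined as a symmetric limit, so one cannot simply invoke a result on absolutely convergent series. The key observation that makes everything go through is that although $S_N$ converges only conditionally, the sequence of derivatives $S_N'$ does converge absolutely and uniformly on compacta disjoint from $\Z$, which is all that the standard termwise-differentiation theorem requires. Once that point is made carefully, the rest of the argument is elementary trigonometric calculus.
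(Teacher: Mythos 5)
Your argument is correct: once \eqref{eq:sumtheta1} is granted, the termwise differentiation is legitimately justified exactly as you say, since the differentiated series converges absolutely and uniformly on compacta disjoint from $\Z$ (by comparison with $\sum 1/n^2$), and pointwise convergence of $S_N$ at the points of $K$ plus uniform convergence of $S_N'$ is all the standard theorem needs; the conditional, symmetric-limit nature of \eqref{eq:sumtheta1} is then harmless. However, this is a genuinely different route from the paper's, and in fact the paper explicitly anticipates your approach and deliberately declines it: it remarks that \eqref{eq:sumtheta2} ``can be obtained from \eqref{eq:sumtheta1} by differentiating term by term,'' but instead proves \eqref{eq:sumtheta2} \emph{directly}, adapting the Herglotz trick of \cite{AiZi-cot}. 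Concretely, the paper sets $f(\theta)=\pi^2/\sin^2(\pi\theta)$ and $g(\theta)=\sum_{n\in\Z}1/(n+\theta)^2$, shows (Lemma~\ref{lem:fg}) that both are even, $1$-periodic, satisfy the duplication-type functional equation $h(\theta/2)+h((\theta+1)/2)=4h(\theta)$, and have the same singular behavior at $0$, namely $\lim_{\theta\to0}\bigl(h(\theta)-1/\theta^2\bigr)=\pi^2/3$ --- this last step reusing the already-proved value $\zeta(2)=\pi^2/6$ from Theorem~\ref{teo:zeta2k} --- and then concludes $f=g$ by a maximum argument applied to the continuous periodic function $h=f-g$. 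The trade-off: your proof is shorter but outsources the hard analytic content to \eqref{eq:sumtheta1}, which the paper only quotes from the literature (Euler, Knopp, Hofbauer) and never proves; the paper's proof is self-contained within its elementary framework and works only with absolutely convergent series, which is precisely why it chose to bypass the cotangent expansion. So your proposal is valid mathematics, but if the goal is the paper's stated one of avoiding unproved external inputs, you would additionally have to supply a proof of \eqref{eq:sumtheta1}, which is more work than the direct Herglotz argument for \eqref{eq:sumtheta2} itself.
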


Actually, \eqref{eq:sumtheta2} can be obtained form \eqref{eq:sumtheta1} by differentiating term by term. However, instead of proving~\eqref{eq:sumtheta1} first and then doing that, here we directly prove~\eqref{eq:sumtheta2} by adapting the proof of \eqref{eq:sumtheta1} in \cite{AiZi-cot}, also using the Herglotz trick. By using the previously determined value $\sum_{n=1}^{\infty} 1/n^2 = \pi^2/6$ in our proof, in Lemma~\ref{lem:fg}, the treatment of \eqref{eq:sumtheta2} is simpler than that required for \eqref{eq:sumtheta1}, in part because the series converges absolutely. Thus, let us now give a proof of Proposition~\ref{prop:sumtheta2} using elementary methods.

Define the functions
\begin{equation}
\label{eq:deffg}
  f(\theta) = \frac{\pi^2}{\sin^2(\pi\theta)},
  \qquad 
  g(\theta) = \sum_{n\in\Z} \frac{1}{(n+\theta)^2},
  \qquad \theta \in \R \setminus \Z.
\end{equation}
We want to prove that $f=g$. It is clear that $f$ and $g$ are even, and both are periodic of period~$1$. Let us see that they share some other properties.

\begin{lemma}
\label{lem:fg}
The functions $f$ and $g$ defined in \eqref{eq:deffg} satisfy
\begin{equation}
\label{eq:eqfunfg}
  f\Big(\frac{\theta}{2}\Big) + f\Big(\frac{\theta+1}{2}\Big) = 4f(\theta),
  \qquad
  g\Big(\frac{\theta}{2}\Big) + g\Big(\frac{\theta+1}{2}\Big) = 4g(\theta),
\end{equation}
and
\begin{equation}
\label{eq:lim0fg}
  \lim_{\theta\to0} \Big( f(\theta) - \frac{1}{\theta^2} \Big)
  = \lim_{\theta\to0} \Big( g(\theta) - \frac{1}{\theta^2} \Big)
  = \frac{\pi^2}{3}.
\end{equation}
\end{lemma}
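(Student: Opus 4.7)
The plan is to verify each of the four claims independently: two functional equations (one for $f$ and one for $g$) and two limits at $\theta=0$. Each is essentially a direct computation, so I will organize the proof as four short calculations.

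For the functional equation of $f$, the key identities are $\sin(\pi(\theta+1)/2) = \cos(\pi\theta/2)$ and the double-angle formula $\sin(\pi\theta) = 2\sin(\pi\theta/2)\cos(\pi\theta/2)$. Combining them,
\[
  f\Big(\frac{\theta}{2}\Big) + f\Big(\frac{\theta+1}{2}\Big)
  = \frac{\pi^2}{\sin^2(\pi\theta/2)} + \frac{\pi^2}{\cos^2(\pi\theta/2)}
  = \frac{\pi^2}{\sin^2(\pi\theta/2)\cos^2(\pi\theta/2)}
  = \frac{4\pi^2}{\sin^2(\pi\theta)},
\]
which is $4f(\theta)$. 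For the $g$-version, I would split and reindex: $g(\theta/2) = \sum_{n\in\Z} 4/(2n+\theta)^2$ runs over even integers (shifted by $\theta$), and $g((\theta+1)/2) = \sum_{n\in\Z} 4/(2n+1+\theta)^2$ runs over odd integers. Their union, weighted by $4$, is exactly $4g(\theta)$. A brief remark is in order regarding absolute convergence of $g$ (which makes the rearrangement legal): since we only sum reciprocals of squares of numbers bounded below by $|n|/2$ for $|\theta|<1/2$, the series converges absolutely and rearrangement is permitted.

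For the $f$-limit, I would expand $\sin(\pi\theta) = \pi\theta - (\pi\theta)^3/6 + O(\theta^5)$, whence $\sin^2(\pi\theta) = \pi^2\theta^2\bigl(1 - (\pi\theta)^2/3 + O(\theta^4)\bigr)$, so that
\[
  \frac{\pi^2}{\sin^2(\pi\theta)} = \frac{1}{\theta^2}\bigl(1 + (\pi\theta)^2/3 + O(\theta^4)\bigr) = \frac{1}{\theta^2} + \frac{\pi^2}{3} + O(\theta^2),
\]
giving the desired limit $\pi^2/3$. For the $g$-limit, the strategy is to separate the $n=0$ term from $g(\theta)$, so that $g(\theta) - 1/\theta^2 = \sum_{n\ne 0} 1/(n+\theta)^2$, and then pass $\theta\to 0$ inside the sum. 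Justification uses the Weierstrass $M$-test: for $|\theta|\le 1/2$ and $n\ne 0$ we have $|n+\theta|\ge |n|/2$, hence $|1/(n+\theta)^2|\le 4/n^2$, which is summable. Uniform convergence then permits termwise passage to the limit, yielding $\sum_{n\ne 0} 1/n^2 = 2\zeta(2) = \pi^2/3$, invoking the already-proved identity $\zeta(2) = \pi^2/6$ from Theorem~\ref{teo:zeta2k}.

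The only conceptual subtlety is the interchange of limit and infinite sum in the $g$-limit, but this is immediate from the uniform bound just mentioned, so there is no real obstacle. It is worth emphasizing that invoking $\zeta(2) = \pi^2/6$ here is not circular: that value was established earlier via a completely independent telescoping argument, and the present lemma will in turn be used (via Proposition~\ref{prop:sumtheta2}) to handle the higher-order series in Section~\ref{sec:musinalt}.
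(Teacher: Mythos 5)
Your proof is correct and follows essentially the same route as the paper: the same trigonometric identity for the functional equation of $f$, an even/odd reindexing for $g$, the Taylor expansion of $\sin$ for the $f$-limit, and a uniformly justified termwise limit together with $\zeta(2)=\pi^2/6$ for the $g$-limit. The only cosmetic difference is that you justify the $g$-identity by absolute convergence and rearrangement, whereas the paper works with symmetric partial sums $g_N$ and lets the explicit leftover term $4/(\theta+2N+1)^2$ tend to zero; both are perfectly adequate.
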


\begin{proof}
Using the identity
\begin{align*}
  \frac{1}{\sin^2(\pi\theta/2)} + \frac{1}{\sin^2(\pi(\theta+1)/2)}
  &= \frac{1}{\sin^2(\pi\theta/2)} + \frac{1}{\cos^2(\pi\theta/2)} \\
  &= \frac{\cos^2(\pi\theta/2) + \sin^2(\pi\theta/2)}{\sin^2(\pi\theta/2)\cos^2(\pi\theta/2)}
  = \frac{4}{\sin^2(\pi\theta)}
\end{align*}
proves that~$f$ satisfies~\eqref{eq:eqfunfg}.
We check that $g$ satisfies the same functional equation. We have $g(\theta) = \lim_{N\to\infty} g_N(\theta)$, where
\[
  g_N(\theta) = \frac{1}{x^2} 
  + \sum_{n=1}^{N} \Big( \frac{1}{(\theta+n)^2} + \frac{1}{(\theta-n)^2} \Big).
\]
Since
\[
  \frac{1}{(\theta/2 \pm n)^2} + \frac{1}{((\theta+1)/2 \pm n)^2}
  = \frac{4}{(\theta \pm 2n)^2} + \frac{4}{(\theta \pm 2n+1)^2},
\]
it is clear that
\[
  g_N\Big(\frac{\theta}{2}\Big) + g_N\Big(\frac{\theta+1}{2}\Big)
  = 4g_{2N}(\theta) + \frac{4}{(\theta+2N+1)^2},
\]
and letting $N\to\infty$ we obtain \eqref{eq:eqfunfg} for~$g$.

We now turn to~\eqref{eq:lim0fg}.
For $f$, using the Taylor expansion $\sin(x) = x - x^3/3! + x^5/5! - \cdots$, we have
\[
  \lim_{\theta\to0} \Big( f(\theta) - \frac{1}{\theta^2} \Big)
  = \lim_{\theta\to0} \Big( \frac{\pi^2}{\sin^2(\pi\theta)} - \frac{1}{\theta^2} \Big)
  = \lim_{\theta\to0} \frac{\pi^2\theta^2 - \big(\pi\theta - (\pi\theta)^3/3! 
  + \cdots\big)^2}{\theta^2 \sin^2(\pi\theta)}
  = \frac{\pi^2}{3}.
\]
For~$g$, 
\[
  \lim_{\theta\to0} \Big( g(\theta) - \frac{1}{\theta^2} \Big) 
  = \sum_{n\in\Z\setminus\{0\}} \lim_{\theta\to0} \frac{1}{(n+\theta)^2}
  = 2 \sum_{n=1}^{\infty} \frac{1}{n^2} = \frac{\pi^2}{3},
\]
and the proof is concluded (it is easy to justify that we can introduce the limit inside the summation using, for example, uniform convergence on the interval $[-1/2,1/2]$, but we are not going to worry about the details).
\end{proof}

Now, consider the difference
\begin{equation*}
  h(\theta) = f(\theta) - g(\theta) 
  = \big(f(\theta) - 1/\theta^2\big)  - \big(g(\theta) - 1/\theta^2\big).
\end{equation*}
The corresponding properties of $f$ and $g$ imply that $h$ is periodic of period $1$
and satisfies
\begin{equation}
\label{eq:eqfunf}
  h\Big(\frac{\theta}{2}\Big) + h\Big(\frac{\theta+1}{2}\Big)
  = 4h(\theta).
\end{equation}
Moreover, $\lim_{\theta\to0} h(\theta) = 0$, thus $h$ becomes continuous at $\theta=0$ by defining $h(0)=0$. In fact, by $1$-periodicity, $h$ extends to a continuous function on $\R$ by defining $h(k)=0$ for $k\in\Z$.

Since $h$ is a continuous periodic function, it has a maximum $m \in \R$; let $\theta_0 \in [0,1]$ such that $h(\theta_0) = m$. Because $h(0)=0$, we have $m \ge0$. Using~\eqref{eq:eqfunf},
\[
  h\Big(\frac{\theta_0}{2}\Big) + h\Big(\frac{\theta_0+1}{2}\Big)
  = 4h(\theta_0) = 4m,
\]
but then either $h(\theta_0/2) \ge 2m$ or $h((\theta_0+1)/2) \ge 2m$. Because $m$ is the maximum, this can only happen if $m=0$. Consequently, $m=0$ and $h = f - g =0$, which concludes the proof of Proposition~\ref{prop:sumtheta2}.

\section{Apostol-Euler polynomials to sum a ``positive series''}
\label{sec:musinalt}

For $\lambda = e^{\mu i}$, $\mu \in \R$, we wish to find the value of the sum
\begin{equation}
\label{eq:calZtilde}
  \widetilde{\calZ}(k;\mu) := \sum_{m\in\Z} \frac{1}{(2m\pi - \mu)^{k+1}}.
\end{equation}
The case $k=1$ follows from Proposition~\ref{prop:sumtheta2}:
\begin{equation}
\label{eq:summu2}
  \widetilde{\calZ}(1;\mu) = \sum_{m\in\Z} \frac{1}{(2m\pi-\mu)^2} = \frac{1}{4\sin^2(\mu/2)}.
\end{equation}
We are calling the series $\widetilde{\calZ}(k;\mu)$ in~\eqref{eq:calZtilde} a ``positive series'' only as a means of contrasting it with the terminology of ``alternating series'' as refers to the series $\calZ(k;\mu)$ defined in~\eqref{eq:calZ}. Clearly the summands in \eqref{eq:calZtilde} are not always positive.

The goal of this section is to prove the following:

\begin{theorem} 
\label{teo:summusa}
For $\mu \in \R \setminus 2\pi\Z$ and $k = 1,2,\dots$, we have
\begin{equation}
\label{eq:summusa}
  \widetilde{\calZ}(k;\mu) 
   = \frac{1}{2 \cdot k!} \, i^{k+1} e^{i\mu} \calE_{k}(1;-e^{i\mu}).
\end{equation}
\end{theorem}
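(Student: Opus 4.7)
\textbf{Plan for the proof of Theorem~\ref{teo:summusa}.} The idea is to imitate the proof of Theorem~\ref{teo:summu}, but now working with the Apostol--Euler polynomials $\calE_k(x;-e^{i\mu})$ (note the sign change in the parameter) together with a \emph{non-alternating} telescoping identity. Two modifications will be required: the Dirichlet-type kernel produced by the telescope is $\sin((2M+1)\pi x)/\sin(\pi x)$, whose denominator vanishes at \emph{both} endpoints $x=0$ and $x=1$; consequently we will need to subtract a constant from the integrand so as to cancel the boundary values at \emph{both} endpoints rather than at a single one.

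First I would set up the Fourier coefficient formula. Choosing the branch $\log(-e^{i\mu}) = i(\pi+\mu)$, we have $(-e^{i\mu})^x = e^{i\pi x}e^{i\mu x}$, so Proposition~\ref{prop:FC-AE} applied to $\lambda = -e^{i\mu}$ simplifies (via the cancellations $e^{i\pi x}e^{-(2m+1)\pi i x} = e^{-2m\pi i x}$ and $(2m+1)\pi i - i(\pi+\mu) = i(2m\pi-\mu)$) to
\[
\calI_{k,m} = \int_0^1 e^{i\mu x}\calE_k(x;-e^{i\mu})\, e^{-2m\pi i x}\,dx = \frac{2\cdot k!}{i^{k+1}(2m\pi-\mu)^{k+1}}.
\]
Since $k\ge 1$, the series $\sum_{m\in\Z}\calI_{k,m}$ is absolutely convergent and equals $\frac{2\cdot k!}{i^{k+1}}\widetilde{\calZ}(k;\mu)$, so the task reduces to evaluating $\sum_{m\in\Z}\calI_{k,m}$ by another route.

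Next I would perform the boundary adjustment. Set $F(x) = e^{i\mu x}\calE_k(x;-e^{i\mu})$. The relation $\lambda\calE_k(1;\lambda)+\calE_k(0;\lambda)=0$ for $k\ge 1$ (derived inside the proof of Proposition~\ref{prop:FC-AE}) specialized at $\lambda=-e^{i\mu}$ yields $\calE_k(0;-e^{i\mu})=e^{i\mu}\calE_k(1;-e^{i\mu})$, so $F(0)=F(1)=c$ with $c:=e^{i\mu}\calE_k(1;-e^{i\mu})$. Since $\int_0^1 c\, e^{-2m\pi i x}\,dx$ is $c$ when $m=0$ and $0$ otherwise, defining $\widetilde{\calI}_{k,m}:=\int_0^1(F(x)-c)e^{-2m\pi i x}\,dx$ gives $\sum_{m\in\Z}\calI_{k,m}=c+\sum_{m\in\Z}\widetilde{\calI}_{k,m}$. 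I would then apply the telescoping identity
\[
e^{-2m\pi i x} = \frac{e^{-(2m-1)\pi i x}-e^{-(2m+1)\pi i x}}{2i\sin(\pi x)},
\]
interchange the finite partial sum with the integral, and collapse the telescope to obtain
\[
\sum_{m=-M}^{M}\widetilde{\calI}_{k,m} = \int_0^1 \frac{F(x)-c}{\sin(\pi x)}\,\sin((2M+1)\pi x)\,dx.
\]
Since $F-c$ vanishes at $x=0$ and $x=1$ while $\sin(\pi x)$ has simple zeros at both points, the quotient $(F-c)/\sin(\pi x)$ extends to a continuously differentiable function on $[0,1]$, so Lemma~\ref{lem:RL} forces the limit to be $0$ as $M\to\infty$. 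Hence $\sum_{m\in\Z}\widetilde{\calI}_{k,m}=0$ and $\sum_{m\in\Z}\calI_{k,m}=c$, which substituted into the identity of the first step yields precisely \eqref{eq:summusa}.

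The main obstacle is the subtraction step: in Theorem~\ref{teo:summu} one only had to kill the value at a single zero ($x=1/2$) of $\cos(\pi x)$, whereas here the \emph{two} equalities $F(0)=c$ and $F(1)=c$ must hold simultaneously, and a priori there is no reason for $F$ to take the same value at both endpoints. The decisive observation is that the quasi-periodicity relation $\lambda\calE_k(1;\lambda)+\calE_k(0;\lambda)=0$ for the Apostol--Euler polynomials turns, under the substitution $\lambda=-e^{i\mu}$, into the genuine equality $\calE_k(0;-e^{i\mu})=e^{i\mu}\calE_k(1;-e^{i\mu})$, which is exactly what matches the extra factor $e^{i\mu x}$ built into $F$. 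This twist of the parameter from $e^{i\mu}$ to $-e^{i\mu}$ is the reason the Apostol--Euler polynomials appear in the statement of the theorem with precisely this sign.
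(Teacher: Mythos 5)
Your argument is correct, and it differs from the paper's proof in one substantive way. The paper works with $\widetilde{e}_{k}(x;-\lambda) = \calE_{k}(x;-\lambda) - \calE_{k}(0;-\lambda) - (1-\lambda)x\,\calE_{k}(1;-\lambda)$, i.e.\ it subtracts a \emph{linear} function of $x$ (a multiple of $\calE_{1}(x;-\lambda)$) so that the unweighted polynomial vanishes at both endpoints; the price is a correction term proportional to $\sum_{m\in\Z} 1/(2m\pi-\mu)^{2}$, which is why the paper needs the partial-fraction expansion \eqref{eq:sumtheta2} of Proposition~\ref{prop:sumtheta2} (the Herglotz-trick section) to conclude via \eqref{eq:summu2}. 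You instead absorb the weight into $F(x)=e^{i\mu x}\calE_{k}(x;-e^{i\mu})$, observe via $\lambda\calE_{k}(1;\lambda)+\calE_{k}(0;\lambda)=0$ (valid for $k\ge 1$, which is exactly where $k=0$ fails) that $F(0)=F(1)=c$, and subtract only the \emph{constant} $c$; its contribution to the sum is just the $m=0$ coefficient, namely $c$ itself, so no auxiliary series evaluation is needed and Section~\ref{sec:parfrac} becomes superfluous for this theorem. The telescoping step is the same in substance (your identity is a rewriting of \eqref{eq:telessinpi}, collapsing to the Dirichlet kernel $\sin((2M+1)\pi x)/\sin(\pi x)$), and the appeal to Lemma~\ref{lem:RL} is legitimate since $(F-c)/\sin(\pi x)$ extends to a $C^{1}$ function on $[0,1]$ (apply the lemma to real and imaginary parts, as the paper implicitly does elsewhere); absolute convergence for $k\ge1$ justifies identifying the symmetric limit with $\frac{2\cdot k!}{i^{k+1}}\widetilde{\calZ}(k;\mu)$. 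In short: your route is leaner and more self-contained for Theorem~\ref{teo:summusa}, while the paper's route mirrors the structure of Theorem~\ref{teo:summu} more literally (constant subtraction at the polynomial level, with the resulting auxiliary series evaluated separately) and showcases the expansion of $1/\sin^{2}$ as an independent result.
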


\begin{remark}
Before we turn to the proof, note that if $\mu \in 2\pi\Z$, not only does one of the denominators in the series $\widetilde{\calZ}(k;\mu)$ vanish, but also, since $-e^{\mu i} = -1$, the Apostol-Euler polynomials $\calE_{k}(x;-e^{-i\mu})$ are not even defined. Thus \emph{both} sides of~\eqref{eq:summusa} are meaningless.

Observe also that~\eqref{eq:summusa} is not valid for $k=0$. Indeed, we have seen in~\eqref{eq:summu1} that $\widetilde{\calZ}(0;\mu) = \frac{-1}{2\tan(\mu/2)}$. However, $\calE_{0}(1;-\lambda) = 2/(1-\lambda)$, hence the right hand side of \eqref{eq:summusa} is
$- 2i e^{i\mu}/(1-e^{i\mu}) = i + \frac{1}{\tan(\mu/2)}$. 
Since our goal is to express $\widetilde{\calZ}(k;\mu)$ in terms of the Apostol-Euler polynomials, this exception is the reason why, in Section~\ref{sec:parfrac}, we proved \eqref{eq:sumtheta2} instead of~\eqref{eq:sumtheta1}.
\end{remark}

To prove Theorem~\ref{teo:summusa}, one could give a complete proof using Apostol-Bernoulli polynomials, but since they are essentially the same as the Apostol-Euler polynomials (recall~\eqref{eq:relAB-AE}), it is not really worth the trouble. Instead, we adapt the proof of Theorem~\ref{teo:summu}, changing $\lambda$ to $-\lambda$ and seeing where this leads.

Keeping in mind that $\lambda = e^{i\mu}$, we have $-\lambda = e^{i(\mu+\pi)}$, hence $\log(-\lambda) = \pi i + \log\lambda$, and so by Proposition~\ref{prop:FC-AE}, we have
\begin{equation}
\label{eq:FC-AEpares}
\begin{aligned}
\widetilde{\calI}_{k,m} &:= \int_0^1 (-\lambda)^x \calE_{k}(x;-\lambda)e^{-(2m+1)\pi ix}\,dx
= \frac{2 \cdot k!}{((2m+1)\pi i - \log (-\lambda))^{k+1}} \\
&= \frac{2 \cdot k!}{(2m\pi i - \log\lambda)^{k+1}}.
\end{aligned}
\end{equation}
Since $\log \lambda = i \mu$, we have
\[
  \frac{2\cdot k!}{i^{k+1}} \widetilde{\calZ}(k;\mu) =  \sum_{m\in\Z} \widetilde{\calI}_{k,m}.
\]

Now, we want to find a function $\widetilde{e}_{k}(x;-\lambda)$ straightforwardly related to $\calE_{k}(x;-\lambda)$ but vanishing at $0$ and~$1$. The reason for this lies with the telescoping series~\eqref{eq:telessinpi} used below, which is analogous to~\eqref{eq:telescospi} but instead of $\cos(\pi x)$, introduces a factor of $\sin(\pi x)$ in the denominator, which vanishes at $x = 0$, $1$. Since we need  $\widetilde{e}_{k}(x;-\lambda)/\sin(\pi x)$ to be continuously differentiable in order
to apply Lemma~\ref{lem:RL}, the numerator $\widetilde{e}_{k}(x;-\lambda)$ must also vanish at these points.

The following expression related to $\calE_{k}(x;-\lambda)$ satisfies our requirements:
\begin{align*}
  \widetilde{e}_{k}(x;-\lambda) 
  &= \calE_{k}(x;-\lambda) - \calE_{k}(0;-\lambda) - (1-\lambda) x \calE_{k}(1;-\lambda) \\
  &= \calE_{k}(x;-\lambda) - \Big(\calE_{k}(0;-\lambda) + (1-\lambda) x \calE_{k}(1;-\lambda)\Big)
\end{align*}
(from the proof of Proposition~\ref{prop:FC-AE} we have that $(-\lambda)\calE_{k}(1;-\lambda) + \calE_{k}(0;-\lambda) = 0$ for $k \ge1$).

Since $\calE_{k}(0;-\lambda) = \lambda \calE_{k}(1;-\lambda)$, we may rewrite
$\calE_{k}(0;-\lambda) + (1-\lambda) x \calE_{k}(1;-\lambda) = (\lambda+(1-\lambda)x) \calE_{k}(1;-\lambda)$.
Furthermore, $\calE_{1}(x;-\lambda) = 2\lambda/(1-\lambda)^2 + 2x/(1-\lambda)$, therefore $\frac{(1-\lambda)^2}{2} \calE_{1}(x;-\lambda) = \lambda+(1-\lambda)x$, and thus
\begin{equation}
\label{eq:ektilde}
  \widetilde{e}_{k}(x;-\lambda) 
  = \calE_{k}(x;-\lambda)  - \frac{(1-\lambda)^2}{2} \calE_{k}(1;-\lambda) \calE_{1}(x;-\lambda).
\end{equation}

If we let $k=1$ in~\eqref{eq:FC-AEpares}, we obtain
\[
  \int_0^1 (-\lambda)^x \calE_{1}(x;-\lambda) e^{-(2m+1)\pi ix}\,dx
  = \frac{2 \cdot 1!}{(2m\pi i - \log \lambda)^{2}} = \frac{-2}{(2m\pi - \mu)^{2}}.
\]
Hence by~\eqref{eq:ektilde}, we have
\[
  \widetilde{\calI}'_{k,m} := \int_0^1 (-\lambda)^x \widetilde{e}_{k}(x;-\lambda) e^{-(2m+1)\pi ix} \,dx 
  = \widetilde{\calI}_{k,m} + \frac{(1-\lambda)^2\calE_{k}(1;-\lambda)}{(2m\pi-\mu)^2}.
\]
Thus
\[
  \frac{2\cdot k!}{i^{k+1}} \widetilde{\calZ}(k;\mu)
  = \sum_{m\in\Z} \widetilde{\calI}_{k,m}
  = \sum_{m\in\Z} \widetilde{\calI}'_{k,m} 
    - (1-\lambda)^2 \calE_{k}(1;-\lambda) \sum_{m\in\Z} \frac{1}{(2m\pi-\mu)^2}.
\]
The series on the right has already been computed in~\eqref{eq:summu2}.
All we need to do now is to express $\sum_{m\in\Z} \widetilde{\calI}'_{k,m}$ as a telescoping series and verify that it sums to~$0$. 

Assuming this has been done, we obtain, for $k \geq 1$,
\begin{align*}
  \widetilde{\calZ}(k;\mu) 
  &= -\frac{i^{k+1}}{2\cdot k!} (1-\lambda)^2 \calE_{k}(1;-\lambda) 
    \sum_{m\in\Z} \frac{1}{(2m\pi-\mu)^2} \\
  &= -\frac{i^{k+1}}{2 \cdot k!} (\lambda^{-1/2}-\lambda^{1/2})^2 \lambda \,
    \frac{\calE_{k}(1;-\lambda)}{4 \sin^2(\mu/2)} 
  = -\frac{(e^{-i\mu/2}-e^{i\mu/2})^2}{8 \cdot k! \sin^2(\mu/2)} 
    \, i^{k+1} e^{i\mu} \calE_{k}(1;-\lambda) \\
  &= -\frac{(-2i\sin(\mu/2))^2}{8 \cdot k! \sin^2(\mu/2)} 
    \, i^{k+1} e^{i\mu} \calE_{k}(1;-\lambda) 
  = \frac{1}{2 \cdot k!} \, i^{k+1} e^{i\mu} \calE_{k}(1;-e^{i\mu}),
\end{align*}
which completes the proof of Theorem~\ref{teo:summusa}.
It is worth mentioning that the previous argument is not valid for $k = 0$, because the
relation $(-\lambda)\calE_{k}(1;-\lambda) + \calE_{k}(0;-\lambda) = 0$ does not hold in that case.


As for expressing the $\sum_{m\in\Z} \widetilde{\calI}'_{k,m}$ as a telescoping series, noting that we lack the sign $(-1)^m$ which was present in \eqref{eq:calZ}, we instead consider
\[
  e^{-(2m+2)\pi ix} - e^{-(2m)\pi ix} = e^{-(2m+1)\pi ix} (e^{-\pi ix} - e^{\pi ix}) 
  = e^{-(2m+1)\pi ix} \cdot (-2 i) \sin(\pi x),
\]
which gives, in analogy with~\eqref{eq:telescospi},
\begin{equation}
\label{eq:telessinpi}
  e^{-(2m+1)\pi ix} = \frac{ie^{-(2m+2)\pi ix}}{2\sin(\pi x)} 
  - \frac{ie^{-2m\pi ix}}{2\sin(\pi x)}.
\end{equation}
This now allows the telescoping argument to work. As we mentioned above, the denominator $\sin(\pi x)$ vanishes at $x=0$ and $x=1$, as does $\widetilde{e}_{k}(x;-\lambda)$, which implies that the sum of the series $\sum_{m\in\Z} \widetilde{\calI}'_{k,m}$ is~$0$, using Lemma~\ref{lem:RL} as we did previously in the proofs of Theorems~\ref{teo:zeta2k}, \ref{teo:beta2k+1} and~\ref{teo:summu}.

With all this said, the proof of Theorem~\ref{teo:summusa} is concluded.

\subsection{Another expression for the sum}

As was the case with \eqref{eq:summu} in Theorem~\ref{teo:summu}, looking at the expression \eqref{eq:summusa}, it is not immediately obvious that the values $\widetilde{\calZ}(k;\mu)$ are real, as must be the case when $\mu$ is real. However, we can find an alternative formula for $\widetilde{\calZ}(k;\mu)$ which solves this problem. Let us begin with a result analogous to Lemma~\ref{lem:Ekmu}:

\begin{lemma}
\label{lem:Ekmutilde}
Let $\calE_k(x;-\lambda)$ be the Apostol-Euler polynomials, defined by~\eqref{eq:GF-AE}, with parameter $-\lambda = e^{i(\mu+\pi)}$, $\mu \in \R \setminus 2\pi\Z$ (recall that $-\lambda=-1$ is not allowed since $\calE_k(x;-1)$ is undefined). Then 
\begin{equation*}
  \widetilde{\bfE}_k(\mu)  
  := i^{k+1} e^{i\mu} \calE_k(1;-e^{i\mu}),
  \qquad k=1,2,3,\dots,
\end{equation*}
is a real number (this is false for $k=0$, since in that case the expression on the right is $2i\lambda/(1-\lambda) = -1/\tan(\mu/2)-i$, which is not real). In fact, the values $\widetilde{\bfE}_k(\mu)$ may be computed via the generating function
\begin{equation}
\label{eq:Ekmutilde}
  -\frac{1}{\tan(w/2)}   
  = -\frac{1}{\tan(\mu/2)} + \sum_{k=1}^{\infty} \widetilde{\bfE}_k(\mu) \frac{(w-\mu)^k}{k!},
  \qquad |w - \mu| < d(\mu, 2\pi\Z), 
\end{equation}
and hence, if for $k = 0$ we define $\widetilde{\bfE}_0(\mu) = -\frac{1}{\tan(\mu/2)}$, then
\[
  \widetilde{\bfE}_k(\mu) = -\frac{d^k}{d\mu^k} \left(\frac{1}{\tan(\mu/2)}\right),
  \qquad k=0,1,2,3,\dots. 
\]
\end{lemma}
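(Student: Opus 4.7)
The plan is to adapt the argument of Lemma~\ref{lem:Ekmu}: start from the generating function~\eqref{eq:GF-AE} with parameter $-\lambda = -e^{i\mu}$, evaluate at $x=1$, substitute $z = is$, and multiply both sides by $i e^{i\mu}$. The coefficient of $s^k/k!$ on the right then becomes exactly $i^{k+1} e^{i\mu}\calE_k(1;-e^{i\mu}) = \widetilde{\bfE}_k(\mu)$, so the identity
\[
  \frac{2 i e^{i\mu} e^{is}}{1-e^{i(\mu+s)}} = \sum_{k=0}^\infty \widetilde{\bfE}_k(\mu)\,\frac{s^k}{k!}
\]
holds in a neighborhood of $s=0$, with the convention that $\widetilde{\bfE}_0(\mu)$ is temporarily the value coming from the generating function. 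Setting $w = s+\mu$ recasts the left-hand side as $2 i e^{iw}/(1-e^{iw})$ and the right-hand side as a power series in $w-\mu$.

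Next I would simplify the left-hand side by multiplying numerator and denominator by $e^{-iw/2}$. Using $e^{-iw/2}-e^{iw/2} = -2 i \sin(w/2)$ together with Euler's formula, this gives
\[
  \frac{2 i e^{iw}}{1-e^{iw}} = \frac{-e^{iw/2}}{\sin(w/2)} = -\cot(w/2) - i,
\]
hence
\[
  -\cot(w/2) - i = \sum_{k=0}^\infty \widetilde{\bfE}_k(\mu)\,\frac{(w-\mu)^k}{k!},
\]
with radius of convergence $d(\mu,2\pi\Z)$ dictated by the poles of $\cot(w/2)$ at $w \in 2\pi\Z$.

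The key subtlety, and the real content of the lemma, is that the constant $-i$ contributes \emph{only} to the $k=0$ coefficient. Thus $\widetilde{\bfE}_0(\mu) = -\cot(\mu/2) - i$ is genuinely non-real, matching the stated exception, while for every $k \geq 1$ the value $\widetilde{\bfE}_k(\mu)$ equals the $k$-th Taylor coefficient of $-\cot(w/2)$ at $w=\mu$, namely $-\frac{d^k}{d\mu^k}(1/\tan(\mu/2))$, and is automatically real because $-\cot(w/2)$ is real-analytic on $\R \setminus 2\pi\Z$. Redefining $\widetilde{\bfE}_0(\mu) := -1/\tan(\mu/2)$ removes the stray $-i$ and yields the generating function identity~\eqref{eq:Ekmutilde} exactly as stated. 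I expect the only delicate point is the bookkeeping of this $k=0$ anomaly and clearly recording why the ``real number'' assertion must exclude it; the remaining manipulations of exponentials are routine and parallel Lemma~\ref{lem:Ekmu}.
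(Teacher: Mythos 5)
Your proposal is correct and follows essentially the same route as the paper: evaluate the generating function at $x=1$ with parameter $-e^{i\mu}$, set $z=is$, multiply by $ie^{i\mu}$, substitute $w=s+\mu$, simplify the left side to $-\cot(w/2)-i$, and observe that the constant $-i$ affects only the $k=0$ coefficient, which explains the excluded case and gives the stated generating function for $k\geq 1$. No gaps.
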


\begin{proof} 
In~\eqref{eq:GF-AE}, first change $\lambda$ to $-\lambda$; next, multiply by $i\lambda$ and then substitute $\lambda = e^{i\mu}$, $x = 1$ and $z = is$. This yields
\begin{equation}
\label{eq:auxdemotilde}
  \frac{2i e^{i (\mu + s)}}{1 - e^{i (\mu + s)}}
  = \sum_{k=0}^\infty i e^{i\mu} \calE_k(1,-e^{i\mu}) \frac{(is)^k}{k!}
  = C_0(\mu) + \sum_{k=1}^\infty \widetilde{\bfE}_k(\mu) \frac{s^k}{k!}
\end{equation}
where, either setting $s = 0$ or recalling that $\calE_{0}(x; \lambda) = \frac{2}{1 + \lambda}$, we have
\[
  C_0(\mu) = i e^{i\mu} \calE_{0}(1,-e^{i\mu})
  = \frac{2 i e^{i \mu}}{1 - e^{i \mu}}.
\] 
Substituting $w = s + \mu$, we can rewrite~\eqref{eq:auxdemotilde} as a Taylor series for~$C_0$:
\begin{equation}
\label{eq:C0}
  C_0(w) = C_0(\mu) +  \sum_{k=1}^\infty \widetilde{\bfE}_k(\mu) \frac{(w-\mu)^k}{k!}.
\end{equation}
Finally, we compute
\[
  C_0(w) = \frac{2i e^{wi}}{1 - e^{wi}}
  = \frac{2 i e^{i w/2}}{e^{-iw/2} - e^{iw/2}}
  = -\frac{\cos(w/2) + i \sin(w/2)}{\sin(w/2)}
  = -\frac{1}{\tan(w/2)} - i.
\]
Substituting in~\eqref{eq:C0} and canceling the $-i$ from both sides of the equality, we obtain~\eqref{eq:Ekmutilde}.
\end{proof}

\begin{corollary}
\label{cor:summusa}
For $\mu \in \R \setminus 2\pi\Z$ and $k = 1,2,\dots$, we have
\[
  \widetilde{\calZ}(k;\mu) = \sum_{m\in\Z} \frac{1}{(2m\pi - \mu)^{k+1}}
  = \frac{\widetilde{\bfE}_k(\mu)}{2 \cdot k!}
\]
where the $\widetilde{\bfE}_k(\mu)$ are the coefficients of the generating function
\begin{equation*}
  -\frac{1}{\tan(w/2)} = \sum_{k=0}^{\infty} \widetilde{\bfE}_k(\mu) \frac{(w-\mu)^k}{k!},
  \qquad |w - \mu| < d(\mu, 2\pi\Z), 
\end{equation*}
and hence
\[
  \widetilde{\bfE}_k(\mu) = -\frac{d^k}{d\mu^k} \left(\frac{1}{\tan(\mu/2)}\right).
\]
\end{corollary}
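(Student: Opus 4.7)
The plan is to read off the corollary as an immediate consequence of Theorem~\ref{teo:summusa} combined with Lemma~\ref{lem:Ekmutilde}, which have already done all the real work. For $k \geq 1$ and $\mu \in \R \setminus 2\pi\Z$, Theorem~\ref{teo:summusa} gives the closed form
\[
  \widetilde{\calZ}(k;\mu) = \frac{1}{2 \cdot k!}\, i^{k+1} e^{i\mu} \calE_{k}(1;-e^{i\mu}),
\]
and Lemma~\ref{lem:Ekmutilde} names the quantity $\widetilde{\bfE}_k(\mu) := i^{k+1} e^{i\mu} \calE_{k}(1;-e^{i\mu})$. Substituting this name directly yields $\widetilde{\calZ}(k;\mu) = \widetilde{\bfE}_k(\mu)/(2 \cdot k!)$, which is the first assertion of the corollary.

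The generating function statement is just the generating function \eqref{eq:Ekmutilde} from Lemma~\ref{lem:Ekmutilde}, rewritten with the $k=0$ term absorbed into the sum using the convention $\widetilde{\bfE}_0(\mu) = -1/\tan(\mu/2)$; the series on the right of~\eqref{eq:Ekmutilde} starts at $k=1$, but by construction the constant term of the Taylor expansion of $-1/\tan(w/2)$ around $w = \mu$ equals $-1/\tan(\mu/2)$, which is exactly $\widetilde{\bfE}_0(\mu)$, so letting the sum start at $k=0$ produces the form displayed in the corollary. The derivative description $\widetilde{\bfE}_k(\mu) = -\frac{d^k}{d\mu^k}(1/\tan(\mu/2))$ then follows at once from the uniqueness of Taylor coefficients, i.e.\ by differentiating both sides of the generating function $k$ times in $w$ and evaluating at $w = \mu$.

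There is essentially no obstacle here: the corollary is a cosmetic repackaging of the previous theorem and lemma, and the only point requiring a line of care is verifying that the $k=0$ extension of $\widetilde{\bfE}_0$ is compatible with the generating function so that the sum can legitimately start at $k = 0$. That compatibility is built into the definition, so the proof reduces to citing Theorem~\ref{teo:summusa} and Lemma~\ref{lem:Ekmutilde} and noting this single bookkeeping observation.
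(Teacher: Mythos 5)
Your proposal is correct and matches the paper exactly: the corollary is indeed just the substitution of the name $\widetilde{\bfE}_k(\mu)$ from Lemma~\ref{lem:Ekmutilde} into the formula of Theorem~\ref{teo:summusa}, with the $k=0$ convention $\widetilde{\bfE}_0(\mu)=-1/\tan(\mu/2)$ absorbed into the generating function and the derivative formula read off from uniqueness of Taylor coefficients, which is precisely how the paper presents it as an immediate consequence.
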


The first $\widetilde{\calZ}(k;\mu)$ are as follows:
\begin{align*}
\widetilde{\calZ}(1;\mu) &=
\frac{1}{4\sin^2(\frac{\mu}{2})},
\qquad
\widetilde{\calZ}(2;\mu) =
\frac{-\cos(\frac{\mu}{2})}{8\sin^3(\frac{\mu}{2})},
\qquad
\widetilde{\calZ}(3;\mu) =
\frac{2 + \cos(\mu)}{48 \sin^4(\frac{\mu}{2})},
\\
\widetilde{\calZ}(4;\mu) &=
\frac{-11 \cos(\frac{\mu}{2}) - \cos(\frac{3\mu}{2})}{384 \sin^5(\frac{\mu}{2})},
\qquad
\widetilde{\calZ}(5;\mu) =
\frac{33 + 26 \cos(\mu) +  \cos(2\mu)}
  {3840 \sin^6(\frac{\mu}{2})},
\\
\widetilde{\calZ}(6;\mu) &=
\frac{-302 \cos(\frac{\mu}{2}) - 57 \cos(\frac{3\mu}{2}) - \cos(\frac{5\mu}{2})}
{46080 \sin^7(\frac{\mu}{2})},
\\
\widetilde{\calZ}(7;\mu) &=
\frac{1208 + 1191 \cos(\mu) + 120 \cos(2\mu) + \cos(3\mu)}
{645120 \sin^8(\frac{\mu}{2})}
\end{align*}
(except for $k=1$, the constant in the denominator is $2^{k}k!$).

\paragraph*{Acknowledgement.}
The research of the authors is supported by grant PID2021-124332NB-C22
(Mi\-nis\-te\-rio de Cien\-cia e Inno\-va\-ci\'on-Agen\-cia Esta\-tal de Inves\-ti\-ga\-ci\'on).



\end{document}